\documentclass[12pt,letterpaper]{article}

\usepackage{amssymb,amsfonts,amscd,amsthm}
\usepackage[all,arc]{xy}
\usepackage{enumerate, bbm}
\usepackage{mathrsfs}
\usepackage{tikz}
\usepackage[left=.9in,top=.9in,right=.9in,bottom=.9in]{geometry}
\usepackage{mathtools}
\usepackage{hyperref}
\usepackage{color}
\usepackage{graphicx}
\usepackage{enumitem} 
\graphicspath{ {TexImages/} }

\newtheorem{thm}{Theorem}[section]
\newtheorem{cor}[thm]{Corollary}
\newtheorem{prop}[thm]{Proposition}
\newtheorem{lem}[thm]{Lemma}
\newtheorem{claim}[thm]{Claim}

\newtheorem{prob}[thm]{Problem}

\hypersetup{
	colorlinks,
	citecolor=blue,
	filecolor=blue,
	linkcolor=blue,
	urlcolor=blue,
	linktocpage
}

\setenumerate[1]{label=\thesection.\arabic*.} 
\setenumerate[2]{label*=\arabic*.} 

\parskip=10pt
\parindent=0pt


\newcommand{\E}{\mathbb{E}}

\newcommand{\al}{\alpha}
\newcommand{\be}{\beta}

\newcommand{\ep}{\varepsilon}
\newcommand{\lam}{\lambda}

\newcommand{\Om}{\Omega}

\newcommand{\Del}{\Delta}
\newcommand{\pa}{\partial}

\renewcommand{\l}{\left}
\renewcommand{\r}{\right}

\newcommand{\half}{\frac{1}{2}}

\renewcommand{\1}{\mathbbm{1}}
\newcommand{\sm}{\setminus}
\newcommand{\sub}{\subseteq}

\renewcommand{\c}[1]{\mathcal{#1}}

\newcommand{\ol}[1]{\overline{#1}}
\newcommand{\tr}[1]{\textrm{#1}}
\newcommand{\rec}[1]{\frac{1}{#1}}
\newcommand{\f}[2]{\frac{#1}{#2}}
\newcommand{\floor}[1]{\l\lfloor #1\r\rfloor}

\newcommand{\Graph}[2]{\includegraphics[width=#1\textwidth]{#2}}
\newcommand{\mr}[1]{\mathrm{#1}}



\newcommand{\ex}{\mr{ex}}

\newcommand{\HI}[1]{\c{H}(#1)}

\newcommand{\B}[2]{\c{C}_{#1}^{#2}}
\newcommand{\BB}[2]{\widehat{\c{C}}_{#1}^{#2}}
\renewcommand{\S}[2]{\widetilde{\c{S}}_{#1}^{#2}}
\newcommand{\F}{\mathbf{F}}
\tikzstyle{vertex} = [fill,shape=circle,node distance=50pt]
\tikzstyle{edge} = [fill,opacity=.5,fill opacity=.5,line cap=round, line join=round, line width=50pt]
\tikzstyle{elabel} =  [fill,shape=circle,node distance=10pt]

\pgfdeclarelayer{background}
\pgfsetlayers{background,main}
\begin{document}

\title{Relative Tur\'an Numbers for Hypergraph Cycles}

\author{Sam Spiro\thanks{Department of Mathematics, University of California, San Diego, 9500 Gilman Drive, La Jolla, CA 92093-0112, USA. E-mail: sspiro@ucsd.edu. This material is based upon work supported by the National Science Foundation Graduate Research Fellowship under Grant No. DGE-1650112.}\and
	Jacques Verstra\"ete\thanks{Department of Mathematics, University of California, San Diego, 9500 Gilman Drive, La Jolla, CA 92093-0112, USA. E-mail: jacques@ucsd.edu. Research supported by the National Science Foundation Awards DMS-1800332 and DMS-1952786,
		and by the Institute for Mathematical Research (FIM) of ETH Z\"urich.}}

\maketitle

\begin{abstract}
	For an $r$-uniform hypergraph $H$ and a family of $r$-uniform hypergraphs $\mathcal{F}$, the relative Tur\'{a}n number $\mathrm{ex}(H,\mathcal{F})$ is the maximum number of edges in an $\mathcal{F}$-free subgraph of $H$. In this paper we give lower bounds on $\mathrm{ex}(H,\mathcal{F})$ for certain families of hypergraph cycles $\mathcal{F}$ such as Berge cycles and loose cycles.  In particular, if $\mathcal{C}_\ell^3$ denotes the set of all $3$-uniform Berge $\ell$-cycles and $H$ is a 3-uniform hypergraph with maximum degree $\Delta$, we prove
	\[\mathrm{ex}(H,\mathcal{C}_4^{3})\ge \Delta^{-3/4-o(1)}e(H),\]
	\[\mathrm{ex}(H,\mathcal{C}_5^{3})\ge \Delta^{-3/4-o(1)}e(H),\]
	and these bounds are tight up to the $o(1)$ term.
\end{abstract}

\section{Introduction}
Let $\c{F}$ be a family of $r$-uniform hypergraphs, or $r$-graphs for short.  The {\em Tur\'{a}n number} $\ex(n,\c{F})$ is defined to be the maximum number of edges in an $\c{F}$-free $n$-vertex $r$-graph. The Tur\'{a}n numbers are a central object of study in extremal graph theory, dating back to Mantel's Theorem~\cite{M} and Tur\'{a}n's Theorem~\cite{T}. A more general problem involves studying the {\em relative Tur\'an number} $\ex(H,\c{F})$, which is the maximum number of edges in an $\c{F}$-free subgraph of an $r$-graph $H$, and we will say that $H$ is a \textit{host} hypergraph.  For example, when $H$ is $K_n^r$ (the complete $r$-graph on $n$ vertices), we simply recover the original Tur\'{a}n number.  

The study of relative Tur\'an numbers was advanced  by Foucaud, Krivelevich, and Perarnau~\cite{FKP} and independently by Briggs and Cox~\cite{BriggsCox}.  Many results have been obtained for relative Tur\'an numbers, both when $H$ is a general host, as well as for random hosts \cite{GSTZ,NSV, PR, SV-Girth, SV-K22}.

In this paper we consider relative Tur\'an numbers for hypergraph cycles, and in particular for Berge cycles.  If $F$ and $F'$ are hypergraphs with $V(F)\sub V(F')$, we say that $F'$ is a \textit{Berge}-$F$ if there exists a bijection $\phi:E(F)\to E(F')$ such that $e\sub \phi(e)$ for all $e\in E(F)$.  We let $\B{\ell}{r}$ denote the set of all $r$-uniform Berge-$F$ where $F$ is an $\ell$-cycle.  When $\ell=2$, $F$ is a double edge, and any $r$-graph which is $\B{2}{r}$-free is said to be \textit{linear}. We let $\B{[\ell]}{r}:=\bigcup_{\ell'=2}^\ell\B{\ell'}{r}$, and any $r$-graph which is $\B{[\ell]}{r}$-free is said to have \textit{girth} $\ell+1$.  In \cite{SV-Girth}, we conjectured
\begin{equation}\label{eq:girth}
\ex(n,\B{[\ell]}{r})\ge n^{1+\rec{\floor{\ell/2}}-o(1)},\tag{$*$}
\end{equation}
which is a natural generalization of a conjecture of Erd\H{o}s and Simonovits~\cite{ES} for graphs.  For $\ell,r\ge 3$, Gy\H{o}ri and Lemons \cite{GL} proved $\ex(n,\B{\ell}{r})=O(n^{1+\f{1}{\floor{\ell/2}}})$, so up to the $o(1)$ term, \eqref{eq:girth} would be best possible. This conjecture is known to hold for $\ell=3$ and all $r$ due to work of Ruzsa and Szemer\'edi~\cite{RS} and Erd\H{o}s, Frankl, and R\"odl~\cite{EFR}, and for $\ell=4$ and all $r$ due to Lazebnik and the second author~\cite{LV} and Timmons and the second author~\cite{TV}.  We emphasize that for $r\ge 3$, it is known that the $o(1)$ term in \eqref{eq:girth} is necessary in general, see Ruzsa and Szemer\'edi~\cite{RS} and Conlon, Fox, Sudakov, and Zhao \cite{CFSZ}.

Our main result is the following, where we recall that an $r$-graph $S$ is a \textit{sunflower} if there exists a set $K$ called the \textit{kernel} such that $e\cap e'=K$ for every pair of distinct edges $e,e'\in E(S)$.  Throughout this paper, the maximum degree $\Del$ of a hypergraph $H$ is the maximum degree of any vertex of $H$, and all of our asymptotic statements are with respect to  $\Delta$ tending towards infinity.
\begin{thm}\label{thm:BGen}
	 Let $\BB{[\ell]}{r}$ consist of all the elements of $\B{[\ell]}{r}$ which are not sunflowers. If $\ell,r\ge 3$ are such that \eqref{eq:girth} holds, then for all $r$-graphs $H$ with maximum degree at most $\Del$, we have 
	\begin{equation}\label{eq:BB}\ex(H,\BB{[\ell]}{r})\ge \Del^{-1+\f{1}{(r-1)\floor{\ell/2}}-o(1)} \cdot e(H),\end{equation}
	and this bound is tight up to the $o(1)$ term in the exponent for $H=K_{\Del^{1/(r-1)}}^r$.
\end{thm}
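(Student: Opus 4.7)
The plan is an embedding argument built on the extremal $\B{[\ell]}{r}$-free hypergraph provided by \eqref{eq:girth}. Set $N=\lceil\Delta^{1/(r-1)}\rceil$ and fix an $r$-graph $G$ on $[N]$ with $e(G)\ge N^{1+1/\lfloor\ell/2\rfloor-o(1)}$ and no Berge cycle of length between $2$ and $\ell$. Draw a uniformly random coloring $\chi\colon V(H)\to[N]$ and let $H'=\{e\in E(H):\chi|_e\text{ is injective and }\chi(e)\in E(G)\}$. For each $c\in E(G)$, run a random-greedy matching inside the fiber $H'|_c=\{e\in H':\chi(e)=c\}$ to obtain a matching $M_c$, and set $H''=\bigsqcup_c M_c$.

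\textbf{Size estimate.}
For any $e\in E(H)$ and $c\in E(G)$, $\Pr[e\in H'|_c]=r!/N^r$. Conditional on $e\in H'|_c$, the expected number of other $H$-edges that share a vertex with $e$ and also lie in $H'|_c$ is at most $r(\Delta-1)(r-1)!/N^{r-1}\le r!$, since $\Delta/N^{r-1}=O(1)$. Random-greedy matching together with Jensen's inequality applied to the convex function $x\mapsto 1/(1+x)$ gives $\Pr[e\in M_c\mid e\in H'|_c]\ge 1/(1+r!)$. Summing over $c\in E(G)$ and $e\in E(H)$,
\[\mathbb{E}[e(H'')]\ge\frac{r!}{1+r!}\cdot\frac{e(G)\,e(H)}{N^r}\ge \Delta^{-1+1/((r-1)\lfloor\ell/2\rfloor)-o(1)}\,e(H),\]
as required. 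The threshold $N=\Delta^{1/(r-1)}$ is chosen precisely so that the expected fiber-degree is $O(1)$, allowing a constant fraction of each fiber to survive into the matching.

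\textbf{Structural claim and main obstacle.}
To verify that $H''$ is $\B{[\ell]}{r}$-free (in particular $\BB{[\ell]}{r}$-free), suppose $C=(v_1,e_1,\ldots,v_{\ell'},e_{\ell'})$ is a Berge $\ell'$-cycle in $H''$ with $\ell'\le\ell$. Writing $f_i=\chi(e_i)$, the sequence $\chi(v_1),f_1,\chi(v_2),f_2,\ldots$ is a closed walk in $G$ with consecutive walk-vertices distinct (by injectivity of each $\chi|_{e_i}$) and consecutive edges distinct (otherwise $e_i,e_{i+1}\in M_{f_i}$ would share $v_{i+1}$, contradicting that $M_{f_i}$ is a matching). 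When the $f_i$ are pairwise distinct, following the first vertex-repetition in the walk yields a Berge $k$-cycle in $G$ for some $2\le k\le\ell'$, contradicting $G$'s girth. The subtlest case, and the main technical obstacle, is a non-consecutive coincidence $f_i=f_j$: then $e_i,e_j$ are disjoint by the matching property, yet $\chi(v_i),\chi(v_{i+1}),\chi(v_j),\chi(v_{j+1})$ all lie in the $r$-set $f_i$, forcing a coincidence among these $\chi$-values. Using the linearity of $G$ (meaning $|f\cap f'|\le 1$ for distinct $f,f'\in E(G)$), one induces on walk length to produce either a consecutive $f$-collision (contradiction with some $M_c$) or a shorter Berge cycle in $G$ (contradiction with girth). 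Tightness for $H=K_N^r$ follows from the Gy\H{o}ri--Lemons bound $\ex(N,\B{\ell}{r})=O(N^{1+1/\lfloor\ell/2\rfloor})$, since sunflower Berge cycles contribute only lower-order terms in $K_N^r$.
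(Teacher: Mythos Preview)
Your size estimate has a genuine error. The bound
\[
\text{(expected fiber-degree of }e)\;\le\; r(\Delta-1)\,\frac{(r-1)!}{N^{r-1}}\;\le\; r!
\]
implicitly treats every neighbor $f$ of $e$ as if $|f\cap e|=1$, but when $|f\cap e|=k$ the conditional probability that $f\in H'|_c$ is $(r-k)!/N^{r-k}$, which for $k\ge 2$ is much larger. Concretely, take $H$ to be a sunflower with kernel of size $r-1$ and $\Delta$ petals. Once $\chi$ is drawn, every petal lies in the \emph{same} fiber (namely the unique $c\in E(G)$, if any, containing $\chi(K)$), and the matching retains a single edge; a short computation gives $\mathbb{E}[e(H'')]=O(e(G)/N^{r-1})$, which falls short of the target $\Delta^{-1+1/((r-1)\lfloor\ell/2\rfloor)}\,e(H)$ by a factor of order $\Delta^{1-1/(r-1)}$. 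In general your random-homomorphism-plus-matching scheme proves the theorem only under the extra hypothesis $\Delta_k=O(\Delta^{(r-k)/(r-1)})$ for every $1\le k<r$; without it the expected fiber-degree is unbounded and the Jensen step yields nothing.

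This is precisely the obstruction the paper's proof isolates. It argues by induction on $r$: for each $k$ in turn, either at least half the edges contain a $k$-set of $k$-degree $\ge \Delta^{(r-k)/(r-1)}$, in which case the separate ``large codegree'' machinery (Proposition~\ref{prop:coMatch}, working through the induced $(r-k+1)$-graph together with Lemma~\ref{lem:linkB} and the inductive bound for $\widehat{\mathcal C}_{[\ell]}^{\,r-k+1}\cup\widetilde{\mathcal S}_{[\ell]}^{\,r-k+1}$) already gives \eqref{eq:BB}; or one deletes half the edges to force $\Delta_k\le \Delta^{(r-k)/(r-1)}$. Iterating over $k$ produces a subgraph $H_r$ with all codegrees controlled, and \emph{only then} does the random-homomorphism step (Proposition~\ref{prop:homGen}) apply with $t=\Theta(\Delta^{1/(r-1)})$. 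Your proposal essentially rediscovers Proposition~\ref{prop:homGen} but skips the codegree dichotomy, which is where the inductive content of the theorem lives.

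Your structural claim, by contrast, is salvageable and can be shortened: because the fiber matchings guarantee $\chi(e_i)\ne\chi(e_j)$ whenever $e_i\cap e_j\ne\emptyset$, the restriction of $\chi$ to any Berge $\ell'$-cycle in $H''$ is a local isomorphism in the sense of Section~\ref{sec:hom}, and Lemma~\ref{lem:homB}(3) then produces an element of $\mathcal C_{[\ell]}^{r}$ inside $G$, contradicting its girth. You need not chase walk repetitions by hand.
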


The $r=2$ case of Theorem~\ref{thm:BGen} was proven by Perarnau and Reed~\cite{PR}.  We note that that the bound of Theorem~\ref{thm:BGen} assuming \eqref{eq:girth} can be shown to be tight up to the $o(1)$ term in the exponent by considering $H=K_{\Del^{1/(r-1)}}^r$.

In Theorem~\ref{thm:BGen}, it is necessary to consider Berge cycles without sunflowers.  Indeed, if $S\in \B{\ell}{r}$ is a sunflower with kernel of size $k$, and if $H$ is a sunflower with $\Del$ edges and kernel of size $k$, then $\ex(H,S)=\ell-1=O(\Del^{-1})\cdot e(H)$.

Using Theorem~\ref{thm:BGen}, known results for classical Tur\'an numbers, the fact that no member of $\B{3}{3},\B{4}{3},\B{4}{4}$ is a sunflower, and the observation that $\ex(H,\c{F}')\ge \ex(H,\c{F})$ whenever $\c{F}'\sub \c{F}$, we deduce the following.
\begin{cor}\label{cor:B}
	If $H$ is a 3-graph with maximum degree at most $\Del$, then \begin{align*}
		\ex(H,\B{3}{3})&\ge \Del^{-1/2-o(1)}\cdot e(H),\\
		\ex(H,\B{4}{3})&\ge \Del^{-3/4-o(1)}\cdot e(H).\end{align*}
	If $H$ is a 4-graph with maximum degree at most $\Del$, then 
	\[\ex(H,\B{4}{4})\ge \Del^{-5/6-o(1)}\cdot e(H).\]
\end{cor}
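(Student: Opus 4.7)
The plan is to derive each of the three inequalities by a direct application of Theorem~\ref{thm:BGen}, taking $(\ell,r)\in\{(3,3),(4,3),(4,4)\}$ in turn. In these three cases the exponent $-1+\f{1}{(r-1)\floor{\ell/2}}$ appearing in \eqref{eq:BB} evaluates to $-\half$, $-3/4$, and $-5/6$, respectively, matching the claimed bounds.

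To invoke Theorem~\ref{thm:BGen} I first need to confirm that conjecture~\eqref{eq:girth} holds for the relevant $(\ell,r)$. For $\ell=3$ and any $r\ge 3$ this is the Ruzsa--Szemer\'edi theorem together with the hypergraph extension of Erd\H{o}s, Frankl, and R\"odl \cite{RS, EFR}; for $\ell=4$ and any $r\ge 3$ it follows from the constructions of Lazebnik and Verstra\"ete \cite{LV} and of Timmons and Verstra\"ete \cite{TV}. Both facts are already recorded in the paragraph preceding Theorem~\ref{thm:BGen}, so this step is only a matter of citation.

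The second input is the sunflower check $\B{\ell}{r}\sub\BB{[\ell]}{r}$, i.e.\ the claim that no Berge $\ell$-cycle in an $r$-graph is a sunflower for the three pairs above. My approach is the following short observation: suppose such a cycle with distinct core vertices $v_1,\dots,v_\ell$ were a sunflower with kernel $K$. Each $v_i$ lies in the two consecutive edges $e_{i-1}$ and $e_i$, hence in their intersection $K$, which forces $|K|\ge \ell$. On the other hand, the $\ell\ge 2$ edges of the sunflower are distinct, so $K$ is a proper subset of each edge and thus $|K|\le r-1$. Each of the three pairs satisfies $\ell\ge r$, producing the required contradiction. With this in hand, Theorem~\ref{thm:BGen} applied to $\BB{[\ell]}{r}$ combined with the monotonicity $\ex(H,\c{F}')\ge\ex(H,\c{F})$ for $\c{F}'\sub\c{F}$ (taking $\c{F}'=\B{\ell}{r}$ and $\c{F}=\BB{[\ell]}{r}$) yields the corollary.

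I do not anticipate a substantive obstacle here: Theorem~\ref{thm:BGen} carries essentially all of the combinatorial content, and only the elementary sunflower calculation above and the invocation of the known cases of \eqref{eq:girth} remain. The one spot where I would take a little care is making sure the two size bounds on $|K|$ are stated in the right direction; aside from that, the argument is bookkeeping.
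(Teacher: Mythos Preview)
Your proposal is correct and matches the paper's own derivation essentially line for line: the paper deduces the corollary from Theorem~\ref{thm:BGen}, the known cases of \eqref{eq:girth} for $\ell\in\{3,4\}$, the fact that no member of $\B{3}{3},\B{4}{3},\B{4}{4}$ is a sunflower, and the monotonicity $\ex(H,\c{F}')\ge\ex(H,\c{F})$ for $\c{F}'\sub\c{F}$. The only addition you make is the short kernel-size argument verifying the sunflower claim, which the paper simply asserts.
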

Again all of these bounds are tight up to the $o(1)$ term in the exponent.  Using a similar argument, we will show in Section~\ref{sec:B} that for any 3-graph $H$ with maximum degree at most $\Del$,
\[\ex(H,\B{5}{3})\ge \Del^{-3/4-o(1)}\cdot e(H),\]
which does not follow from Theorem~\ref{thm:BGen} since the order of magnitude of $\ex(n,\B{[5]}{3})$ is not known, see \cite{CFSZ}.

We next consider the {\em loose cycle}  $C_\ell^r$, which is the $r$-graph which has $\ell$ edges $e_1,\ldots,e_\ell$ such that $e_i\cap e_{i+1}=\{v_i\}$ for $1\le i\le \ell$ with all $v_i$ distinct and the indices written cyclically, and such that $e_i\cap e_j=\emptyset$ for any other pair of indices $i\ne j$.   

While the classical Tur\'an numbers for loose cycles are known exactly (see \cite{FF,FJ,KMV}), it appears to be difficult to find tight bounds for relative Tur\'an numbers for loose cycles.  In large part this seems to be because, unlike in Theorem~\ref{thm:BGen}, the clique $K_{\Del^{1/(r-1)}}^r$ does not give tight bounds for $\ex(H,C_\ell^r)$ in general.  Indeed, by using recent results of Mubayi and Yepreyman~\cite{MY}, we show that certain random hypergraphs give stronger bounds for $\ex(H,C_\ell^r)$ compared to the clique when $\ell$ is even:

\begin{thm}\label{thm:loose}~
	Let $\ell\ge 3$.  If $H$ is a 3-graph with maximum degree at most $\Del$, then \[\ex(H,C_\ell^3)\ge \Del^{-1+\rec{\ell}-o(1)}\cdot e(H).\]
		
	Moreover, if $\ell$ is even, then there exists a $3$-graph $H$ with maximum degree at most $\Del$ and \[\ex(H,C_{\ell}^{3})\le \Del^{-1+\rec{\ell-1}+o(1)}\cdot e(H).\]
	
\end{thm}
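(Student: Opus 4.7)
I would address the two inequalities in Theorem~\ref{thm:loose} by distinct methods, with probabilistic alteration for the lower bound and a random host construction for the upper bound.

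\textbf{Lower bound.} The plan is to include each edge of $H$ in a random candidate subgraph $H_0$ independently with probability $p = c\Delta^{-1+1/\ell}$ and then remove one edge from each copy of $C_\ell^3$ surviving in $H_0$. The expected size of the resulting $C_\ell^3$-free subgraph is at least $p\cdot e(H) - p^\ell N(H, C_\ell^3)$, where $N(H, C_\ell^3)$ denotes the number of copies of $C_\ell^3$ in $H$. A greedy count along the cycle yields $N(H, C_\ell^3) = O(\Delta^{\ell-2}\Delta_2 e(H))$, where $\Delta_2$ is the maximum codegree of $H$, since the closing edge must contain two previously chosen vertices. The alteration thus succeeds once one reduces to the case $\Delta_2 \le \Delta^{1/\ell+o(1)}$, so the core task is to extract a sub-$3$-graph $H' \subseteq H$ with this codegree bound while retaining $(1-o(1))e(H)$ edges. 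I would attempt this by exploiting the observation that a sunflower with a $2$-element kernel contains no loose cycle (since consecutive loose-cycle edges share exactly one vertex, not two), which allows much of the ``heavy codegree'' structure of $H$ to be retained rather than deleted.

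\textbf{Upper bound (even $\ell$).} Take $H$ to be the binomial random $3$-graph $G_3(n,q)$ with $q$ chosen so that the typical maximum degree $\Delta \approx q\binom{n-1}{2}$ matches the stated $\Delta$; with high probability $e(H) \approx q\binom{n}{3}$. The theorem of Mubayi and Yepreyman~\cite{MY} then provides, with high probability, an upper bound on the maximum number of edges in a $C_\ell^3$-free subgraph of $G_3(n,q)$. Substituting our $q$ into this bound and converting to expressions in $\Delta$ and $e(H)$ yields the claimed $\Delta^{-1+1/(\ell-1)+o(1)}e(H)$ after routine arithmetic.

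\textbf{Main obstacle.} The principal difficulty lies in the codegree-reduction step of the lower bound: a naive approach that deletes every edge containing a heavy codegree pair would remove up to $\Delta^{1-1/\ell}e(H)$ edges, which is far too many. A more careful argument is needed that retains most of the sunflower-like edges (noting that these cannot contribute more than one edge to any loose cycle) while handling their interaction with the low-codegree part of $H$ delicately. Once the codegree reduction is achieved, the alteration analysis is straightforward, and the upper bound follows essentially by substitution into Mubayi--Yepreyman.
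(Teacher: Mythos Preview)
Your upper-bound argument is correct and matches the paper's: both take $H = H^3_{n,p}$ at $p = n^{-1}$ (so $\Del = \Theta(n)$, $e(H) = \Theta(n^2)$) and invoke the Mubayi--Yepremyan bound (Proposition~\ref{prop:randLoose}).

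For the lower bound, the alteration step is fine once the maximum $2$-degree is at most $\Del^{1/\ell}$, and you correctly isolate codegree reduction as the crux. But your stated target --- extracting $H' \sub H$ with $\Del_2(H') \le \Del^{1/\ell + o(1)}$ while keeping $(1-o(1))e(H)$ edges --- is unattainable in general: if $H$ is itself a sunflower with a $2$-element kernel and $\Del$ petals, every edge has codegree $\Del$ and no nontrivial thinning exists. Your observation that such a sunflower is $C_\ell^3$-free is the right escape hatch, but you have not turned it into an argument; saying the interaction must be handled ``delicately'' is where the proposal stops.

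The paper resolves this not by preprocessing but by a dichotomy. Pass to a $3$-partite subgraph. If at most half its edges contain a pair with $2$-degree at least $D := \Del^{1/\ell}$, simply discard those edges and run your alteration on the remaining half --- the naive deletion you rejected is harmless in this case. If more than half do, apply Proposition~\ref{prop:coMatch} with $k=2$ and $\be = 0$: since $\c{P}_2(C_\ell^3) = \emptyset$ (Lemma~\ref{lem:linkL}), any subgraph whose first two parts induce a matching is automatically $C_\ell^3$-free, and the proposition manufactures such a subgraph with $\Om(D\Del^{-1}(\log \Del)^{-7})\cdot e(H) = \Del^{-1+1/\ell - o(1)} e(H)$ edges. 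That subgraph is exactly a vertex-disjoint union of $2$-kernel sunflowers, so your sunflower intuition is precisely what powers the heavy-codegree case --- but the point is that there one builds the $C_\ell^3$-free subgraph \emph{directly} from the sunflower structure, with no alteration at all, rather than attempting to merge sunflower edges back into a low-codegree host for a single global deletion argument.
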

It is possible to extend our arguments to $r$-graphs, but the gap between the lower and upper bound grows considerably with $r$.  When the host $H$ is linear, we improve the bounds of Theorem~\ref{thm:loose} to give tight results for all $r$ when $\ell$ is even.
\begin{prop}\label{prop:linear}
	Let $\ell\ge 4$ be even and $r\ge 3$.   If $H$ is a linear $r$-graph with maximum degree at most $\Del$, then \[\ex(H,C_{\ell}^{r})\ge \Del^{-1+\rec{\ell-1}-o(1)}\cdot e(H).\]
	Moreover, there exists a linear $r$-graph $H$ with maximum degree at most $\Del$ and \[\ex(H,C_{\ell}^{r})\le \Del^{-1+\rec{\ell-1}+o(1)}\cdot e(H).\]
\end{prop}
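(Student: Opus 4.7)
For the lower bound, the strategy is a standard probabilistic deletion argument, and the only nontrivial input is a count of loose cycles in $H$. I claim that a linear $r$-graph $H$ with maximum degree at most $\Delta$ contains at most $O_{r,\ell}(\Delta^{\ell-2} e(H))$ copies of $C_\ell^r$. To see this, I build an ordered loose $\ell$-cycle $(e_1,\dots,e_\ell)$ edge-by-edge: fix $e_1$ in one of $e(H)$ ways, and for each $i=2,\dots,\ell-1$ choose a vertex $v_{i-1}\in e_{i-1}$ followed by an edge $e_i$ through $v_{i-1}$. Linearity of $H$ automatically forces $|e_{i-1}\cap e_i|=1$, so each such step contributes at most $r(\Delta-1)$ options. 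The closing edge $e_\ell$ is then pinned down up to $O_r(1)$ choices by one vertex of $e_{\ell-1}$ and one vertex of $e_1$, again using linearity (any two vertices lie in at most one edge of $H$).

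Given this count, include each edge of $H$ independently with probability $p := c\,\Delta^{-(\ell-2)/(\ell-1)}$ for a small enough constant $c=c(r,\ell)$. The expected number of surviving edges is $pe(H)$, while the expected number of surviving copies of $C_\ell^r$ is at most $p^\ell\cdot O(\Delta^{\ell-2}e(H)) \le \tfrac12 pe(H)$. Deleting one edge from each surviving copy leaves a $C_\ell^r$-free subgraph of $H$ with at least $\Omega\bigl(\Delta^{-1+1/(\ell-1)}\bigr)\cdot e(H)$ edges in expectation, proving the lower bound.

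For the upper bound I will construct $H$ as a random linear $r$-graph: on $n$ vertices, include each $r$-subset independently with probability $q=\Theta(\Delta/n^{r-1})$, remove one edge from every pair sharing two or more vertices to enforce linearity, and delete any leftover edges at vertices whose degree exceeds $\Delta$. Standard concentration estimates give $\Delta(H)\le \Delta$ and $e(H)=\Theta(\Delta n)$ with high probability. It then remains to show that every $C_\ell^r$-free subgraph $H'\subseteq H$ satisfies $e(H')\le \Delta^{1/(\ell-1)+o(1)}\cdot n$, which amounts to a random Tur\'an-type bound for loose cycles on linear hosts. The $r=3$ case is essentially Mubayi--Yepreyman~\cite{MY}, used already in the proof of Theorem~\ref{thm:loose}; the general $r$ case should follow by adapting their balanced-supersaturation estimate to $r$-uniform linear hosts and feeding it into the hypergraph container method. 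This upper bound is the principal obstacle: extending the balanced-supersaturation estimate while preserving the exponent $1/(\ell-1)$ requires a tight count of copies of $C_\ell^r$ extending a fixed partial substructure in a dense linear $r$-graph, together with a careful choice of boosting parameters in the container machinery; the lower bound and the construction of $H$ itself are routine by comparison.
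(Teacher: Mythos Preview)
Your lower bound is correct and is exactly the paper's argument: since $H$ is linear, its maximum $2$-degree is $1$, so Lemma~\ref{lem:looseCount} gives $\mathcal{N}_\ell(H)\le r^\ell\Delta^{\ell-2}e(H)$, and sampling each edge with probability $p=\Theta(\Delta^{-1+1/(\ell-1)})$ followed by deletion of one edge per surviving copy yields the bound.

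For the upper bound you are making the problem much harder than it is, and as written your proposal is incomplete. The Mubayi--Yepremyan result cited as Proposition~\ref{prop:randLoose} already covers \emph{all} $r\ge 3$, not only $r=3$: at $p=n^{2-r}$ one has $\ex(H_{n,p}^r,C_\ell^r)\le n^{1+1/(\ell-1)+o(1)}$ a.a.s. The paper simply takes this $H_{n,p}^r$, observes via Chernoff that a.a.s.\ its maximum $2$-degree is $O(\log n)$, and applies the greedy linearization of Lemma~\ref{lem:linear} to pass to a linear subgraph $H\subseteq H_{n,p}^r$ with $e(H)\ge e(H_{n,p}^r)/O(\log n)=n^{2-o(1)}$ and inherited maximum degree $\Delta=\Theta(n)$. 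The trivial monotonicity $\ex(H,C_\ell^r)\le\ex(H_{n,p}^r,C_\ell^r)$ then finishes the proof immediately. No new balanced supersaturation estimate or container argument is required, and there is no need to allow a general relationship between $\Delta$ and $n$: the proposition asks only for the existence of \emph{some} linear host with the stated property, so taking $\Delta\approx n$ suffices. Your proposed programme of extending the supersaturation bound to $r$-uniform linear hosts would, if carried out, likely yield a correct construction at more general densities, but it is substantial additional work that the proposition does not call for.
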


The last hypergraph cycle we consider is $\F$, which is the 3-uniform Berge 4-cycle depicted in Figure~\ref{fig}.  The Tur\'an number for the hypergraph $\F$ is well studied \cite{ErdosDisjoint,FurediDisjoint, MubayiDisjoint, MVDisjoint, PV}.  By Corollary~\ref{cor:B}, we have $\ex(H,\F)\ge \Del^{-3/4-o(1)}e(H)$ for all hosts $H$ with maximum degree at most $\Del$.  We improve this lower bound as follows.

\begin{figure}
	\begin{center}
		\Graph{.34}{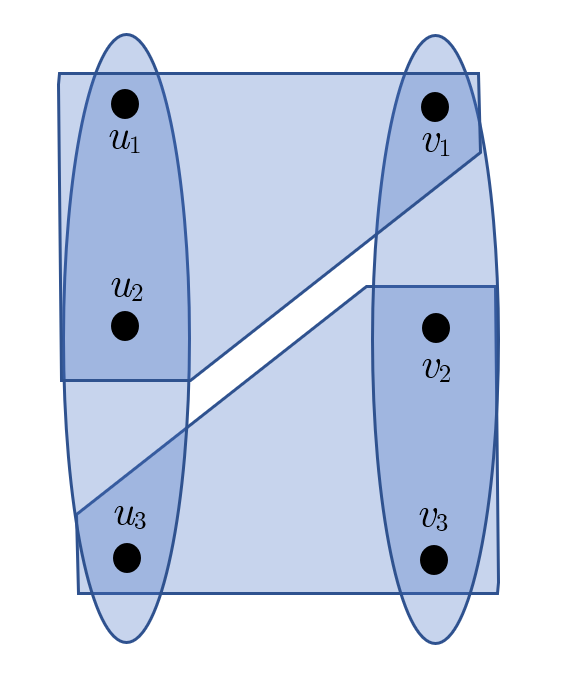}
	\end{center}
	\caption{The 3-graph $\F$ with edges $\{u_1,u_2,u_3\},\{u_1,u_2,v_1\},\{u_1,v_2,u_3\},\{v_1,v_2,v_3\}$}\label{fig}
\end{figure}

\begin{thm}\label{thm:disjoint}
	If $H$ is a 3-graph with maximum degree at most $\Del$, then \begin{align*} \ex(H,\F)\ge \Del^{-3/5-o(1)}\cdot e(H). \end{align*} 
\end{thm}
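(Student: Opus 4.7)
The plan is to reduce Theorem~\ref{thm:disjoint} to a direct application of Corollary~\ref{cor:B} by exhibiting a Berge triangle hidden inside $\F$. Once the reduction is in place, Corollary~\ref{cor:B} does all the work, and in fact delivers a stronger bound than the one claimed.

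Concretely, I would consider the three edges $e_2=\{u_1,u_2,v_1\}$, $e_3=\{u_1,v_2,u_3\}$, and $e_4=\{v_1,v_2,v_3\}$ of $\F$. Their pairwise intersections are the three distinct singletons $\{u_1\}=e_2\cap e_3$, $\{v_2\}=e_3\cap e_4$, and $\{v_1\}=e_2\cap e_4$, so $\{e_2,e_3,e_4\}$ is a loose 3-uniform triangle on 6 vertices. Taking the underlying graph triangle with vertex set $\{u_1,v_1,v_2\}$, one has the containments $\{u_1,v_1\}\sub e_2$, $\{v_1,v_2\}\sub e_4$, and $\{v_2,u_1\}\sub e_3$; the bijection sending $\{u_1,v_1\}\mapsto e_2$, $\{v_1,v_2\}\mapsto e_4$, $\{v_2,u_1\}\mapsto e_3$ then exhibits the sub-hypergraph $\{e_2,e_3,e_4\}$ of $\F$ as a 3-uniform Berge-$K_3$, so $\{e_2,e_3,e_4\}\in\B{3}{3}$.

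It follows that any $\B{3}{3}$-free subgraph $H'$ of the host $H$ must automatically be $\F$-free: a copy of $\F$ in $H'$ would restrict to the above Berge-$K_3$ on the three edges corresponding to $e_2,e_3,e_4$, contradicting $\B{3}{3}$-freeness. Hence $\ex(H,\F)\ge \ex(H,\B{3}{3})$, and Corollary~\ref{cor:B} supplies $\ex(H,\B{3}{3})\ge \Del^{-1/2-o(1)}\cdot e(H)$. Since $-1/2>-3/5$, this immediately yields the desired bound $\ex(H,\F)\ge \Del^{-3/5-o(1)}\cdot e(H)$, in fact with a stronger exponent than the theorem requires.

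The only step that requires any care is the initial identification of the loose triangle sitting inside $\F$; once spotted, Corollary~\ref{cor:B} handles everything and no sparsification, deletion, or counting argument tailored to the specific structure of $\F$ is needed. I therefore do not anticipate any substantive obstacle in carrying out this plan.
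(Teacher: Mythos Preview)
Your reduction rests on the three edges $e_2=\{u_1,u_2,v_1\}$, $e_3=\{u_1,v_2,u_3\}$, $e_4=\{v_1,v_2,v_3\}$ forming a loose triangle, which you read off from the caption of Figure~\ref{fig}. Unfortunately that caption contains a typo: the third edge of $\F$ is $\{u_3,v_2,v_3\}$, not $\{u_1,v_2,u_3\}$. You can see this independently in two places in the paper. First, in the proof of Lemma~\ref{lem:countF} the edges are listed (in a different order) as $\{u_1,u_2,u_3\},\{u_1,u_2,v_1\},\{v_1,v_2,v_3\},\{u_3,v_2,v_3\}$. Second, the example in Section~\ref{sec:co} asserting $C_4\in\c{P}_2(\F)$ via the tripartition $V_1=\{u_1,v_3\}$, $V_2=\{u_2,v_2\}$, $V_3=\{u_3,v_1\}$ only produces a matching on $V_1\cup V_2$ when the third edge is $\{u_3,v_2,v_3\}$; with $\{u_1,v_2,u_3\}$ the vertex $u_1$ would have degree~$2$ there. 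The correct $\F$ is the ``generalized $4$-cycle'': two disjoint pairs of edges with the same six-vertex union.

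With the correct edge set, $\F$ contains no Berge triangle at all. The pairwise intersections are
\[
e_1\cap e_4=\emptyset,\qquad e_2\cap e_3=\emptyset,
\]
so any three of the four edges include a disjoint pair and hence cannot support a core triple $\{a,b,c\}$ with $a,b,c$ lying in pairwise intersections. Thus the inequality $\ex(H,\F)\ge \ex(H,\B{3}{3})$ does not follow, and your appeal to Corollary~\ref{cor:B} collapses. This is consistent with the concluding remarks, where the bound $\ex(H,\F)\ge \Del^{-1/2-o(1)}e(H)$ is explicitly posed as an open problem; had a Berge triangle been present, that problem would be trivial.

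The paper's actual proof is a genuine two-case argument. If many edges of a tripartite $H'\sub H$ contain a pair with $2$-degree at least $\Del^{4/5}$, one invokes Proposition~\ref{prop:coMatch} with $k=2$, using $\c{P}_2(\F)=\{C_4\}$ and the graph bound $\ex(G,C_4)\ge \widetilde{\Del}^{-1/2-o(1)}e(G)$ from Proposition~\ref{prop:r2}. Otherwise one passes to a subgraph with maximum $2$-degree $D\le \Del^{4/5}$ and runs a deletion argument: sample edges at rate $p\approx \Del^{-3/5}$, use the counting lemma $\c{N}_{\F}(H)\le 9D\Del\,e(H)$ (Lemma~\ref{lem:countF}) to bound the expected number of surviving copies of $\F$ by a constant fraction of the expected edge count, and delete one edge per copy. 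The exponent $-3/5$ is exactly what balances these two regimes at the threshold $D=\Del^{4/5}$.
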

We do not have a matching upper bound for $\ex(H,\F)$, see the concluding remarks for further discussions.

\subsection{Organization and Notation} 
All of our proofs for lower bounding $\ex(H,\c{F})$ follow the same basic strategy which we briefly outline here.  By considering a dense subgraph of the host hypergraph, we can always assume that $H$ is $r$-partite.  If $H$ has small codegrees, i.e. if no set of vertices of $H$ is contained in many edges, then we apply the method of random homomorphisms developed in Section~\ref{sec:hom}.  The idea here is to take a dense $\c{F}$-free hypergraph $J$ and define a random mapping $\chi:V(H)\to V(J)$ and use $\chi$ to determine a large $\c{F}$-free subgraph of $H$. If $H$ has large codegrees then the above approach is not effective, and in this case we need tools which we develop in Section~\ref{sec:co}.  The idea here is that there will be some $k$ such that many sets of size $k$ in $H$ will be contained in many edges.  Given this, we try and find a subgraph of $H$ which induces a matching on the first $k$ parts.

After we develop these general techniques, we move on to prove our theorems.  We prove our main result for Berge cycles, Theorem~\ref{thm:BGen}, in Section~\ref{sec:B}, we prove Theorem~\ref{thm:disjoint} in Section~\ref{sec:disjoint}, and our results for loose cycles, Theorem~\ref{thm:loose} and Proposition~\ref{prop:linear}, are proven in Section~\ref{sec:L}. Concluding remarks and further problems are given in Section~\ref{sec:concluding}.

We gather some notation and definitions that we use throughout the text. A set of size $k$ will be called a {\em $k$-set}.  If $H$ is an $r$-graph, the number of edges containing a $k$-set $\{u_1,\ldots,u_k\}$ is called the {\em $k$-degree} of the set and is denoted by $d_H(u_1,\ldots,u_k)$.  Throughout the text we omit ceilings and floors whenever these are not crucial.  We often make use of the following basic fact due to Erd\H{o}s and Kleitman~\cite{EK}: every $r$-graph $H$ has an $r$-partite subgraph with at least $r^{-r}e(H)$ edges.  

We say that an $r$-graph $F$ is {\em non-linear} if two of its edges intersect in more than one vertex, or equivalently if it contains an element of $\B{2}{r}$ as a subgraph.   Given an $r$-graph $F$, we say that $A=\{v_1,\ldots,v_\ell\}\sub V(F)$ is a \textit{core set with respect to} $e_1,\ldots,e_\ell\in E(F)$ if $v_i,v_{i+1}\in e_i$ for all $1\le i<\ell$ and if $v_1,v_\ell\in e_\ell$, and we simply say that $A$ is a \textit{core set} if such a choice of edges exist.  Observe that the existence of a core set $\{v_1,\ldots,v_\ell\}$ in $F$ is equivalent to saying that $F$ contains an element of $\B{\ell}{r}$ as a subgraph.

\section{Hosts with Small Codegrees}\label{sec:hom}
If $\chi$ is a map from vertices of $H$ and $e=\{u_1,\ldots,u_r\}\in E(H)$, we define the set  $\chi(e)=\{\chi(u_1),\ldots,\chi(u_r)\}$.  If $F$ and $F'$ are $r$-uniform  hypergraphs, then we say that a map $\chi:V(F)\to V(F')$ is a \textit{local isomorphism} if $\chi$ is a homomorphism\footnote{That is, if for every $e\in E(F)$ we have $\chi(e)\in E(F')$.} and if $\chi(e)\ne \chi (f)$ whenever $e,f\in E(F)$ with $e\cap f\ne \emptyset$. We define $\HI{F}$ to be the set of $F'$ for which there exists a local isomorphism $\chi:V(F)\to V(F')$.  If $\c{F}$ is a family of $r$-graphs we define $\HI{\c{F}}=\bigcup_{F\in \c{F}}\c{H}(F)$.  Note that in general $\HI{F}$ will be infinite. Our main result for this section is the following general result.

\begin{prop}\label{prop:homGen}
	Let $H$ be an $r$-graph, and for all $k<r$ let $\Del_k$ denote the maximum $k$-degree of $H$.  If $t\ge r^24^r \Del_k^{1/(r-k)}$ for all $k$, then for any family of $r$-graphs $\c{F}$, \[\ex(H,\c{F})\ge \ex(t,\HI{\c{F}})t^{-r}\cdot e(H).\]
\end{prop}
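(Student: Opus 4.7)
The plan is to implement the random homomorphism strategy described in the introduction. Let $J$ be an $\HI{\c{F}}$-free $r$-graph on $t$ vertices with $e(J)=\ex(t,\HI{\c{F}})$ edges. Pick $\chi:V(H)\to V(J)$ uniformly and independently at random. For each $e\in E(H)$, let $X_e$ be the indicator of the event $\chi(e)\in E(J)$ (note this automatically forces $\chi|_e$ to be injective, since $J$ is $r$-uniform), and let $B_e$ be the event that some $f\in E(H)\sm\{e\}$ with $f\cap e\ne\emptyset$ satisfies $\chi(f)=\chi(e)$. Define $H'$ to be the subgraph of $H$ whose edges are those $e$ with $X_e=1$ and $B_e$ not occurring. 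The claimed bound will follow from $\ex(H,\c{F})\ge \E[e(H')]$ once I verify (a) $H'$ is $\c{F}$-free for every realization of $\chi$, and (b) $\E[e(H')]\ge e(J)\,t^{-r}\,e(H)$.

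For (a), suppose that a copy of some $F_0\in\c{F}$ sits inside $H'$. For each edge $e$ of this copy, $X_e=1$ gives $\chi(e)\in E(J)$, so $\chi|_{V(F_0)}$ is a homomorphism into $J$; and for any two intersecting edges $e,f$ of the copy, $\neg B_e$ gives $\chi(e)\ne\chi(f)$. Hence $\chi|_{V(F_0)}$ is a local isomorphism, producing a subgraph $F'\sub J$ with $F'\in\HI{F_0}\sub\HI{\c{F}}$, contradicting the choice of $J$.

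For (b), a direct count gives $\P[X_e=1]=r!\,e(J)/t^r$, and for any fixed $f\ne e$ with $|e\cap f|=k\in\{1,\dots,r-1\}$,
\[
\P[X_e=1,\ \chi(f)=\chi(e)] \;=\; \f{r!\,(r-k)!\,e(J)}{t^{2r-k}},
\]
since one chooses the common image $S=\chi(e)=\chi(f)\in E(J)$, a bijection $e\to S$ in $r!$ ways, and then a bijection $f\sm e\to S\sm \chi(e\cap f)$ in $(r-k)!$ ways. Using $\#\{f:|f\cap e|=k\}\le\binom{r}{k}\Del_k$, a union bound gives
\[
\f{\P[X_e=1,\ B_e]}{\P[X_e=1]} \;\le\; \sum_{k=1}^{r-1}\f{r!}{k!}\cdot\f{\Del_k}{t^{r-k}},
\]
and the hypothesis $t\ge r^24^r\Del_k^{1/(r-k)}$ makes this at most $1-1/r!$. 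Consequently $\P[X_e=1,\neg B_e]\ge e(J)/t^r$, and linearity of expectation gives (b).

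The main obstacle I anticipate is the bookkeeping in (b): one has to check that $\sum_{k=1}^{r-1}(r!/k!)(r^{2}4^{r})^{-(r-k)}\le 1-1/r!$ for all $r\ge 3$. This reduces to showing $r!\cdot(r^{2}4^{r})^{-(r-1)}$ is tiny, which is comfortable because $(r^2)^{r-1}$ grows like $r^{2(r-1)}$ and dominates $r!\sim r^r$ already for $r\ge 3$; the factor $4^r$ in the hypothesis is there to absorb the binomial and factorial coefficients uniformly. Apart from this quantitative check, the rest is conceptually routine.
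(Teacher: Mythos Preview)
Your proof is correct and follows essentially the same random-homomorphism argument as the paper: pick an extremal $\HI{\c{F}}$-free $J$ on $t$ vertices, take a uniform random $\chi:V(H)\to V(J)$, keep those $e$ with $\chi(e)\in E(J)$ whose image is not shared by any intersecting $f$, and bound the bad event by a union bound over $k=|e\cap f|$. The only cosmetic differences are that the paper bounds the slightly cruder event $\chi(f)\subseteq\chi(e)$ (yielding the simpler factor $(r/t)^{r-k}$) and aims for $\Pr[\neg B_e\mid X_e=1]\ge\half$ rather than $1/r!$; as a minor aside, in your final check the dominant summand is $k=r-1$, not $k=1$, but either way $\sum_{k=1}^{r-1}\f{r!}{k!}(r^24^r)^{-(r-k)}\le\sum_{j\ge 1}(r4^r)^{-j}<\half$.
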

\begin{proof}
	Let $J$ be an extremal $\HI{\c{F}}$-free $r$-graph on $t$ vertices and $\chi:V(H)\to V(J)$ chosen uniformly at random.  Let $H'\sub H$ be the random subgraph which keeps the edge $e\in E(H)$ if 
	\begin{itemize}
		\item[$(1)$] $\chi(e)\in E(J)$, and
		\item[$(2)$] $\chi(e)\ne \chi(f)$ for any other $f\in E(H)$ with $|e\cap f|\ne 0$.  
	\end{itemize}
	
	We claim that $H'$ is $\c{F}$-free.  Indeed, assume $H'$ contained a subgraph $F$ isomorphic to some element of $\c{F}$.  Let $F'$ be the subgraph of $J$ with $V(F')=\{\chi(u):u\in V(F)\}$ and $E(F')=\{\chi(e):e\in E(F)\}$, and we note that $F\sub H'$ implies that each edge of $F$ satisfies (1), so every element of $E(F')$ is an edge in $J$.  By conditions (1) and (2), $\chi$ is a local isomorphism from $F$ to $F'$, so $F'\in \HI{\c{F}}$, a contradiction to $J$ being $\HI{\c{F}}$-free.
	
	It remains to compute $\E[e(H')]$.  Fix $e\in E(H)$. Let $A$ denote the event that (1) is satisfied, let $E_k=\{f\in E(H):|e\cap f|=k\}$, and let $B$ denote the event that $\chi(f)\not\sub \chi(e)$ for all $f\in \bigcup_{k=1}^{r-1} E_k$, which in particular implies (2) for the edge $e$.  It is not too difficult to see that $\Pr[A]= r!e(J)t^{-r}$, and that for all $f\in E_k$ we have $\Pr[\chi(f)\sub \chi(e)|A]=(r/t)^{r-k}$.  We note that $|E_k|\le {r\choose k} \Del_k\le 2^r \Del_k$ for all $k$, since $e$ has at most ${r\choose k}$ subsets of size $k$ and each has $k$-degree at most $\Del_k$.   Thus taking a union bound we find \[\Pr[B|A]\ge 1-\sum_{k} |E_k|(r/t)^{r-k}\ge 1- \sum_{k} 2^r \Del_k(r/t)^{r-k}\ge 1-\sum_{k} r^{-1}2^{-r}\ge \half,\]
	where the second to last inequality used $(r 4^r)^{k-r}\ge r^{-1}4^{-r}$ for $k<r$.  Thus
	\[\Pr[e\in E(H')]=\Pr[A]\cdot \Pr[B|A]\ge r! e(J)t^{-r}\cdot \half \ge e(J)t^{-r},\]
	and linearity of expectation gives $\E[e(H')]\ge e(J)t^{-r}\cdot e(H)=\ex(t,\HI{\c{F}})t^{-r}\cdot e(H)$.  Thus there exists some subgraph $H'\sub H$ with this many edges which is $\c{F}$-free, giving the result.
\end{proof}

We use the following lemma in order to apply Proposition~\ref{prop:homGen} to familes of Berge cycles.

\begin{lem}\label{lem:homB}
~
\begin{itemize}
	\item[$(1)$] If $F$ is a non-linear $r$-graph, then $\HI{F}$ consists of non-linear $r$-graphs.
	\item[$(2)$] Every element of $\HI{\B{5}{3}}$ contains an element of $\B{2}{3}\cup \B{5}{3}$ as a subgraph.
	\item[$(3)$] For $\ell,r\ge 2$, every element of $\HI{\B{[\ell]}{r}}$ contains an element of $\B{[\ell]}{r}$ as a subgraph.
\end{itemize}
\end{lem}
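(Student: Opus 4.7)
Each of the three parts is handled by tracking how a local isomorphism $\chi$ maps the core vertices and cycle edges of a Berge cycle in the domain. Two general observations are used throughout: $\chi$ is injective on each edge of the domain (since any image $\chi(e)$ must be an $r$-set), and the local isomorphism condition forces $\chi(e)\ne \chi(e')$ whenever $e,e'\in E(F)$ intersect.

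For part (1), I would pick distinct edges $e,e'\in E(F)$ with $|e\cap e'|\ge 2$. Injectivity of $\chi$ on $e$ gives $|\chi(e\cap e')|=|e\cap e'|\ge 2$, while $\chi(e\cap e')\subseteq \chi(e)\cap \chi(e')$; since $e\cap e'\ne \emptyset$, the local isomorphism condition yields $\chi(e)\ne \chi(e')$, so $F'$ contains two distinct edges sharing at least two vertices.

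Part (3) carries the main effort. Given $F\in \B{\ell'}{r}$ with $\ell'\le \ell$, core vertices $v_1,\ldots,v_{\ell'}$, and cycle edges $e_1,\ldots,e_{\ell'}$, I set $w_i=\chi(v_i)$ and $f_i=\chi(e_i)$, producing a \emph{good closed walk} in $F'$ of length $\ell'$: namely $w_i,w_{i+1}\in f_i$ cyclically, with $w_i\ne w_{i+1}$ (injectivity on $e_i$) and $f_i\ne f_{i+1}$ (since $e_i,e_{i+1}$ share $v_{i+1}$). I will then prove by induction on $m$ that every good closed walk of length $m\ge 2$ in any $r$-graph contains a Berge $p$-cycle for some $2\le p\le m$; applying this with $m=\ell'\le \ell$ completes part (3). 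The base $m=2$ is immediate. For the step, if the walk has no vertex or edge repetition it is already a Berge $m$-cycle; otherwise I shortcut. For a vertex repetition $w_i=w_j$ at minimum cyclic distance $d$: if $d=2$, the two-step sub-walk is immediately a Berge $2$-cycle; if $d\ge 3$, the length-$d$ sub-walk has distinct intermediate vertices by minimality, and either its first and last edges $f_{i+1},f_j$ are distinct (so the sub-walk is good and induction applies with length $d<m$) or they coincide in an edge containing the three distinct vertices $w_i,w_{i+1},w_{j-1}$, in which case I re-use this edge to close $f_{i+2},\ldots,f_{j-1}$ into a good closed walk of length $d-1$ on the intermediate vertices, to which induction applies. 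If instead all $w_i$ are distinct but $f_i=f_j$ at minimum cyclic distance $d$, then minimality forces the $d$ edges $f_{i+1},\ldots,f_{j-1},f_j$ to be pairwise distinct, so $w_i,f_{i+1},w_{i+1},\ldots,w_{j-1},f_j,w_i$ is already a Berge $d$-cycle. The main obstacle is the ``wrap-around'' edge-distinctness after shortcutting, and the length-$(d-1)$ reduction in the $d\ge 3$ vertex-repetition case is exactly what handles the failure of goodness there.

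Part (2) specializes the part (3) machinery to $r=3$ and $\ell'=5$. Since a $3$-edge cannot contain four distinct vertices, an edge repetition $f_i=f_j$ at cyclic distance $\ge 2$ is impossible when all $w_i$ are distinct, because such an edge would have to contain the distinct tuple $w_{i-1},w_i,w_{j-1},w_j$. Thus if all $w_i$ are distinct the walk is already a Berge $5$-cycle, giving an element of $\B{5}{3}$. Otherwise some $w_i=w_j$ with non-adjacent $i,j$, and in a walk of length $5$ the cyclic distance of any non-adjacent pair is exactly $2$, so the length-$2$ sub-walk is a Berge $2$-cycle, giving an element of $\B{2}{3}$.
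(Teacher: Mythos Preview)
Your argument is correct. Parts (1) and (2) coincide with the paper's proof: for (1) the reasoning is identical, and for (2) both you and the paper observe that in $\mathbb{Z}/5$ any vertex coincidence $w_i=w_{i\pm2}$ or edge coincidence $f_i=f_{i\pm2}$ forces two consecutive (hence distinct) image edges to share two distinct core vertices, yielding a member of $\B{2}{3}$, while the remaining case gives the full Berge $5$-cycle. The only difference is that you package (2) as a special instance of your general walk lemma, whereas the paper argues it directly.

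For part (3) the paper gives no argument of its own---it simply cites \cite[Lemma~4.2]{SV-Girth}---so your inductive lemma ``every good closed walk of length $m\ge 2$ contains a Berge $p$-cycle with $2\le p\le m$'' is a genuine, self-contained addition. The shortcut analysis (minimum-distance vertex or edge repetition, with the length-$(d-1)$ reduction when the first and last edges of the sub-walk coincide) is clean and handles the wrap-around issue correctly.

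One cosmetic fix: you declare the convention $w_i,w_{i+1}\in f_i$, but every subsequent index you write---the first and last edges $f_{i+1},f_j$ of the sub-walk, the reduced edge list $f_{i+2},\ldots,f_{j-1}$, the Berge $d$-cycle $w_i,f_{i+1},w_{i+1},\ldots,w_{j-1},f_j,w_i$, and the quadruple $w_{i-1},w_i,w_{j-1},w_j$ in part (2)---is consistent instead with the convention $w_{i-1},w_i\in f_i$. Under that convention every claim checks out verbatim (for instance, in the edge-repetition case $f_j=f_i\ni w_i$ closes the cycle back to $w_i$, and in the coincidence sub-case $f_{i+1}=f_j$ indeed contains $w_i,w_{i+1},w_{j-1}$). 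Just correct the one line stating the convention, or shift all the edge indices by one.
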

\begin{proof}
	For (1), let $e_1,e_2\in E(F)$ be edges and $u\ne v$ such that $u,v\in e_1\cap e_2$.  Let $\chi:V(F)\to V(F')$ be a local isomorphism, and because $e_1\cap e_2\ne  \emptyset$ we have $\chi(e_1)\ne \chi(e_2)$.  We have $\chi(u)\ne \chi(v)$, as otherwise $|\chi(e_1)|<r$, contradicting $\chi$ being a homomorphism.  Thus the distinct edges $\chi(e_1),\chi(e_2)$ intersect in at least two vertices in $F'$ as desired.
	
	For (2), let $F\in \B{5}{3}$ and let $A=\{v_1,\ldots,v_5\}$ be a core set with respect to the edges $e_1,\ldots,e_5\in E(F)$. Consider a local isomorphism $\chi:V(F)\to V(F')$.  Note that $\chi(e_i)\ne \chi(e_{i+1})$ for any $i$ (where here and throughout this proof we write our indices mod 5) since $e_i\cap e_{i+1}\ne \emptyset$ and $\chi$ is a local isomorphism.  Similarly, observe that $\chi(v_i)\ne \chi(v_{i+1})$ for any $i$, otherwise this would imply $|\chi(e_i)|<3$, contradicting $\chi(e_i)$ being an edge of $F'$.
	
	If $\chi(e_{i})=\chi(e_{i+2})$, then the distinct edges $\chi(e_i),\chi(e_{i+1})$ both contain the distinct vertices $\chi(v_{i+1}),\chi(v_{i+2})$, so $F'$ contains an element of $\B{2}{3}$ as a subgraph.  Thus we will assume $\chi(e_i)\ne \chi(e_{i+2})$ for all $i$, and this together with $\chi(e_i)\ne \chi(e_{i+1})$ implies that all the edges $\chi(e_i)$ are distinct.  Similarly if $\chi(v_i)=\chi(v_{i+2})$, then $\chi(e_i),\chi(e_{i+1})$ show that $F'$ contains an element of $\B{2}{3}$.  We conclude that all of the $\chi(v_i)$ vertices are distinct, and thus these form a core set for the distinct edges $\chi(e_1),\ldots,\chi(e_5)$, so $F'$ contains an element of $\B{5}{3}$ as a subgraph.
	
	Case (3) is proven in  \cite[Lemma 4.2]{SV-Girth} using an argument similar to the proof of (2).
\end{proof}

\section{Hosts with Large Codegrees}\label{sec:co}
As a motivating example for the method of this section, consider the case that $H$ is a complete tripartite 3-graph on $V_1\cup V_2\cup V_3$ with $|V_1|=|V_2|=o(|V_3|)$.  If one wanted to find some $H'\sub H$ avoiding the loose 4-cycle $C_4^{3}$, then an approach would be to take a perfect matching $M$ on $V_1\cup V_2$ and then to take $H'$ to be all the 3-sets containing a pair from $M$.  It turns out that $H'$ is $C_4^{3}$-free and has many edges.  In fact, it will avoid every element of $\B{4}{3}$ except for the 3-graph $\F$ defined in Figure~\ref{fig}. To deal with $\F$, instead of taking $H'$ to have all edges containing an edge of $M$, we take a subset of these edges such that the graph induced by $V_2\cup V_3$ in $H'$ is $C_4$-free, and this will turn out to be enough to give the desired result.

\subsection{Induced $k$-graphs}
We recall that a hypergraph is a \textit{matching} if it is a disjoint union of edges. Given an $r$-partite $r$-graph $F$ on $V_1\cup\cdots\cup V_r$ and a $k$-set $I$, we define $\pa F\l[\bigcup_{i\in I}V_i\r]$ to be the $k$-graph $F'$ with 
\[V(F')=\bigcup_{i\in I}V_i,\hspace{2em} E(F')=\{e\cap \bigcup_{i\in I}V_i:e\in E(F)\},\]
and we call this the {\em$k$-graph induced by} $\bigcup_{i\in I}V_i$.  Let $\c{R}(F)$ denote the set of all $r$-partitions of $F$, and define \[\textstyle{\c{P}_k(F)=\{\pa F\l[\bigcup_{i\ge k}V_i\r]:(V_1,\ldots, V_r)\in \c{R}(F),\ \pa F\l[\bigcup_{i\le k}V_i\r]\tr{ is a matching}\}}.\] 
That is, $\c{P}_k(F)$ consists of all $(r-k+1)$-graphs which can be obtained by first taking an $r$-partition of $F$ such that its first $k$ parts induce a matching, and then taking the $(r-k+1)$-graph induced by $\bigcup_{i\ge k}V_i$.  For example, $C_4\in \c{P}_2(\F)$ since one can consider the tripartition $V_1=\{u_1,v_3\},\ V_2=\{u_2,v_2\},\ V_3=\{u_3,v_1\}$, and it is not too difficult to show that $ \c{P}_2(\F)=\{C_4\}$.   If $\c{F}$ is a family of $r$-graphs we define $\c{P}_k(\c{F})=\bigcup_{F\in \c{F}}\c{P}_k(F)$.   Note that if $F$ is not $r$-partite then $\c{P}_k(F)=\emptyset$.

Our main result for this section is the following, which roughly says that one can prove effective lower bounds on $\ex(H,\c{F})$ if $H$ has many edges containing sets with large $k$-degrees and if one knows effective lower bounds for $\ex(G,\c{P}_k(\c{F}))$ with $G$ any $(r-k+1)$-graph.

\begin{prop}\label{prop:coMatch}
	Let $k<r$, $0\le \be\le 1$, and let $\c{F}$ be a family of $r$-graphs such that $\ex(G,\c{P}_k(\c{F}))=\Om(\widetilde{\Delta}^{-\be}) e(G)$ for any $(r-k+1)$-graph $G$ with maximum degree at most $\widetilde{\Delta}$.  If $H$ is an $r$-partite $r$-graph with maximum degree at most $\Del$ and at least half of the edges of $H$ contain a $k$-set with $k$-degree at least $D$, then \[\ex(H,\c{F})=\Om\l(\f{D^{1-\be}}{\Del(\log \Del)^{7}}\r)\cdot e(H).\]
\end{prop}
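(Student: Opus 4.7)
The idea is to find a matching $M$ of heavy $k$-sets concentrated in $k$ of $H$'s parts, and to use $M$ to reduce the problem to an instance of the hypothesis on an $(r-k+1)$-graph.

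After fixing an $r$-partition $V_1\cup\cdots\cup V_r$ of $V(H)$, I first pigeonhole over the $\binom{r}{k}$ choices of which $k$ parts contain the promised heavy $k$-set in each edge, so that WLOG at least $\Omega(e(H))$ edges of $H$ have their first-$k$-part projection of $k$-degree $\ge D$ in $H$. A dyadic pigeonhole on the $k$-degree then picks out a subgraph $J\subseteq H$ with $\Omega(e(H)/\log\Delta)$ edges and a value $D'\in[D,\Delta]$ so that the first-$k$-part projections of $J$'s edges form a $k$-partite $k$-graph $\c{H}^\ast$ in which each edge has $k$-degree in $[D',2D')$ inside $H$. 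Each vertex of $\c{H}^\ast$ is contained in at most $\Delta/D'$ of its edges (since each such edge contributes $\ge D'$ $H$-edges through that vertex), so $\c H^\ast$ has maximum degree $\le\Delta/D'$.

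The technical heart is to extract a matching $M\subseteq E(\c H^\ast)$ such that the number of $J$-edges extending $M$ is $\Omega(e(J)/\mathrm{polylog}\,\Delta)$. A naive greedy matching gives only $|M|\ge e(H)/(O(\Delta\log\Delta))$, leading to a bound of $\ex(H,\c F)\ge\Omega(D\Delta^{-\beta-1}e(H)/\log\Delta)$, which falls short of the target by a factor of roughly $(\Delta/D)^\beta$ when $D\ll\Delta$. To close this gap, one would further dyadically pigeonhole the $j$-codegrees of $\c H^\ast$ for each $1\le j<k$ to reach a sub-hypergraph with nearly uniform degree and with codegrees small relative to the maximum degree at every scale, then apply a Pippenger--Spencer or Kahn-style nibble to produce a matching $M$ covering nearly all of the smallest part. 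The accumulated losses from these pigeonholes and the probabilistic matching construction are absorbed by the $(\log\Delta)^7$ denominator in the stated bound.

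Given $M$, I define the auxiliary $(r-k+1)$-partite $(r-k+1)$-graph $G$ on $V_k\cup V_{k+1}\cup\cdots\cup V_r$ whose edges are $\{v_k,u_{k+1},\ldots,u_r\}$ for each $J$-edge $\{v_1,\ldots,v_k,u_{k+1},\ldots,u_r\}$ with $\{v_1,\ldots,v_k\}\in M$. Because $M$ is a matching, the vertex $v_k$ uniquely determines its matching edge, yielding a bijection between such $J$-edges and edges of $G$; thus $e(G)\ge D'|M|$ and $G$ has maximum degree at most $\Delta$. The hypothesis then produces a $\c P_k(\c F)$-free $G'\subseteq G$ with $e(G')\ge\Omega(\Delta^{-\beta})e(G)$. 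Lifting $G'$ back to the corresponding subgraph $H'\subseteq H$, any copy of some $F\in\c F$ inside $H'$ would force $\pa F[V_1\cup\cdots\cup V_k]$ to be a sub-matching of $M$ under the inherited $r$-partition of $V(F)$, so $\pa F[V_k\cup\cdots\cup V_r]\in\c P_k(F)\subseteq\c P_k(\c F)$ would embed into $G'$, a contradiction. Hence $H'$ is $\c F$-free with $e(H')\ge\Omega(D^{1-\beta}/(\Delta(\log\Delta)^7))\cdot e(H)$, as desired. The main obstacle throughout is the weighted matching step, which is ultimately what the $(\log\Delta)^7$ factor is paying for.
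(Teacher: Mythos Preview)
Your pigeonholing reductions and the $\c F$-freeness lifting via $\c P_k(\c F)$ are correct and match the paper. The problem is the matching step, and your proposed repair does not work.

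You need the $J$-edges extending $M$ to number $\Omega(e(J)/\mathrm{polylog}\,\Delta)$, i.e.\ $D'|M|=\Omega(e(J)/\mathrm{polylog}\,\Delta)$, which forces $|M|=\Omega(e(\c H^\ast)/\mathrm{polylog}\,\Delta)$ since $e(J)\ge D'e(\c H^\ast)$. But \emph{no} matching in $\c H^\ast$ can be this large in general: if some part of $\c H^\ast$ has all its non-isolated vertices of degree $\Theta(\Delta/D')$ (which nothing rules out), that part has only $\Theta\bigl(e(\c H^\ast)D'/\Delta\bigr)$ vertices, and this is an upper bound on $|M|$. A Pippenger--Spencer nibble yields a matching covering almost all \emph{vertices}, but its \emph{size} is still $\Theta(e(\c H^\ast)D'/\Delta)$ in a near-regular hypergraph---the same order as greedy. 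So your ``close the gap'' step recovers nothing beyond the naive bound you already dismissed. More structurally, the obstacle is that your auxiliary $(r-k+1)$-graph $G$ has maximum degree $\Delta$ on the vertices of $V_{k+1}\cup\cdots\cup V_r$, so the hypothesis can only ever contribute $\Omega(\Delta^{-\beta})$; no choice of $M$ changes this.

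The paper fixes the problem at that second point. Before matching, it randomly keeps each edge of the projected $k$-graph $\pa H[V_1\cup\cdots\cup V_k]$ with probability $p=D\log\Delta/\Delta$, and takes $H_p\subseteq H$ to be the $r$-edges whose projection survives. A Chernoff argument (with a cleaning step to discard edges through high-degree vertices) produces $H'\subseteq H_p$ with maximum degree $O\bigl(D(\log\Delta)^3\bigr)$, every surviving $k$-set still of $k$-degree $\ge D$, and $e(H')=\Omega\bigl((D\log\Delta/\Delta)\,e(H)\bigr)$. Now the projected $k$-graph has maximum degree $O((\log\Delta)^3)$, a \emph{greedy} matching already captures an $\Omega((\log\Delta)^{-3})$ fraction of it, and the resulting $(r-k+1)$-graph $G_M$ has maximum degree $\widetilde\Delta=O\bigl(D(\log\Delta)^3\bigr)$ rather than $\Delta$. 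Applying the hypothesis with this $\widetilde\Delta$ yields the factor $\Omega(D^{-\beta})$ and hence the stated bound. The missing idea is thus random sparsification to drive the ambient maximum degree down to $\Theta(D)$ \emph{before} building $G$, not a sharper matching inside the original $\c H^\ast$.
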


We note that we make no attempt at optimizing the log factor in Proposition~\ref{prop:coMatch}. To prove this result, we establish a few lemmas.

\begin{lem}\label{lem:LFree}
	Let $H$ be an $r$-partite $r$-graph on $V_1\cup \cdots \cup V_r$ and $\c{F}$ a family of $r$-graphs.  If $ \pa H\l[\bigcup_{i\le k}V_i\r]$ is a matching and $ \pa H\l[\bigcup_{i\ge k}V_i\r]$ is $\c{P}_k(\c{F})$-free, then $H$ is $\c{F}$-free.
\end{lem}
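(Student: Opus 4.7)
The plan is to prove the contrapositive by showing that any copy of some $F\in\c{F}$ inside $H$ forces an element of $\c{P}_k(\c{F})$ to appear inside $\pa H\l[\bigcup_{i\ge k}V_i\r]$. So I would start by supposing, for contradiction, that $H$ contains a subgraph isomorphic to some $F\in \c{F}$ and fixing a copy of $F$ in $H$.

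Next, I would exploit the fact that $H$ is $r$-partite to obtain an $r$-partition of this copy of $F$ for free: namely $U_i := V(F)\cap V_i$ for $1\le i\le r$. (In particular, if $F$ itself is not $r$-partite then $\c{P}_k(F)=\emptyset$ but also no copy of $F$ can exist in $H$, so this case is vacuous and I can assume $F$ is $r$-partite.) The point of doing this is that I then have a candidate partition whose associated restriction to $\bigcup_{i\ge k}U_i$ is a natural candidate for an element of $\c{P}_k(F)$, provided the first-$k$-parts condition is met.

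The verification that this partition of $F$ really does lie in $\c{R}(F)$ with the matching property is where the hypothesis gets used, but I expect it to be straightforward: every edge of $\pa F\l[\bigcup_{i\le k}U_i\r]$ is of the form $e\cap \bigcup_{i\le k}V_i$ for some $e\in E(F)\sub E(H)$, so this $k$-graph is a subhypergraph of $\pa H\l[\bigcup_{i\le k}V_i\r]$, which is a matching by hypothesis; any subhypergraph of a matching is again a matching. Hence $(U_1,\ldots,U_r)$ is a valid choice in the definition of $\c{P}_k(F)$, so $\pa F\l[\bigcup_{i\ge k}U_i\r]\in \c{P}_k(F)\sub \c{P}_k(\c{F})$.

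Finally, I would close by observing that $\pa F\l[\bigcup_{i\ge k}U_i\r]$ is, by the same restriction argument, a subhypergraph of $\pa H\l[\bigcup_{i\ge k}V_i\r]$, contradicting the assumed $\c{P}_k(\c{F})$-freeness of the latter. There is no serious obstacle here; the only thing to be careful about is the double-counting of the index $i=k$ (since $\bigcup_{i\le k}V_i$ and $\bigcup_{i\ge k}V_i$ share $V_k$), which is harmless because each edge $e\in E(F)$ contributes exactly one vertex to $V_k$ and the restrictions to the two index ranges intersect only in that vertex.
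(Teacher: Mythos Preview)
Your proposal is correct and follows essentially the same approach as the paper's proof: assume a copy of $F\in\c{F}$ lies in $H$, inherit the $r$-partition $U_i=V(F)\cap V_i$, observe that $\pa F\l[\bigcup_{i\le k}U_i\r]\sub \pa H\l[\bigcup_{i\le k}V_i\r]$ is a matching so that $\pa F\l[\bigcup_{i\ge k}U_i\r]\in\c{P}_k(\c{F})$, and derive a contradiction from $\pa F\l[\bigcup_{i\ge k}U_i\r]\sub \pa H\l[\bigcup_{i\ge k}V_i\r]$. Your added remarks on the vacuous non-$r$-partite case and the shared index $i=k$ are harmless elaborations.
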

\begin{proof}
	Assume $H$ contains a subgraph $F$ isomorphic to some element of $\c{F}$, and consider the $r$-partition $U_1\cup \cdots \cup  U_r$ of $F$ defined by $U_i=V(F)\cap V_i$.  Then $\pa F\l[\bigcup_{i\in I}U_i\r]\sub \pa H\l[\bigcup_{i\in I}V_i\r]$ for all sets $I$.  In particular, $\pa F\l[\bigcup_{i\le k}U_i\r]\sub \pa H\l[\bigcup_{i\le k}V_i\r]$ is a matching, and hence $\pa F\l[\bigcup_{i\ge k}U_i\r]\in\c{P}_k(\c{F})$ by definition.  This contradicts $\pa F\l[\bigcup_{i\ge k}U_i\r]\sub \pa H\l[\bigcup_{i\ge k}V_i\r]$ and $\pa H\l[\bigcup_{i\ge k}V_i\r]$ being  $\c{P}_k(\c{F})$-free.
\end{proof}

\begin{lem}\label{lem:match}
	If $H$ is an $r$-graph with maximum degree at most $\Del>0$, then $H$ contains a matching with at least $e(H)/r\Del$ edges.
\end{lem}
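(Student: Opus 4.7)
The plan is to prove this by a straightforward greedy argument, so I do not expect any real obstacle.

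First I would build the matching $M$ iteratively. Start with $M = \emptyset$ and $E' = E(H)$. At each step, while $E' \ne \emptyset$, pick an arbitrary edge $e \in E'$, add it to $M$, and then delete from $E'$ the edge $e$ together with every other edge of $E'$ that intersects $e$. The resulting set $M$ is a matching by construction, since any two edges added at different steps are disjoint (the second was still in $E'$ when the first was chosen).

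Next I would bound the number of edges removed in a single step. Since $e$ has $r$ vertices and each vertex lies in at most $\Del$ edges of $H$ (and hence of $E'$), the number of edges of $E'$ that meet $e$ is at most
\[
\sum_{v \in e} d_H(v) \le r\Del,
\]
where this count already includes $e$ itself. Therefore each iteration removes at most $r\Del$ edges from $E'$.

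Finally, since the process terminates only once $E'$ is empty, and each iteration adds one edge to $M$ while removing at most $r\Del$ edges from $E'$, we must have $|M| \cdot r\Del \ge e(H)$, i.e., $|M| \ge e(H)/(r\Del)$, as desired. The hypothesis $\Del > 0$ is only needed so that the bound is well-defined; if $e(H) = 0$ the statement is trivial.
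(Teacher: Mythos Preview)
Your argument is correct and is precisely the ``straightforward greedy argument'' the paper alludes to (the paper omits the proof entirely). There is nothing to add.
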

The proof of Lemma~\ref{lem:match} is a straightforward greedy argument and we omit its proof.  Lastly, we use the Chernoff bound~\cite{ProbMeth}.
\begin{prop}[\cite{ProbMeth}]\label{prop:Chern}
	Let $X$ denote a binomial random variable with $N$ trials and probability $p$ of success.  For any $\lam>0$ we have \begin{equation}\label{eq:Chern}\Pr[X-pN>\lam pN]\le e^{-\lam^2 pN/2}.\end{equation}
\end{prop}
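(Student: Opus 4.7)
The plan is to carry out the standard exponential moment (Chernoff) argument. Write $X=\sum_{i=1}^N X_i$ where the $X_i$ are i.i.d.\ Bernoulli$(p)$ indicators. For any parameter $t>0$, apply Markov's inequality to the nonnegative random variable $e^{tX}$:
\[
\Pr[X-pN > \lam pN] \;=\; \Pr\!\l[e^{tX} > e^{t(1+\lam)pN}\r] \;\le\; e^{-t(1+\lam)pN}\,\E[e^{tX}].
\]
This reduces the task to controlling the moment generating function of $X$ and then optimizing over $t$.

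For the moment generating function, independence gives
\[
\E[e^{tX}] \;=\; \prod_{i=1}^N \E[e^{tX_i}] \;=\; (1-p+pe^{t})^N \;\le\; \exp\!\bigl(pN(e^{t}-1)\bigr),
\]
using $1+x\le e^x$ with $x=p(e^t-1)$. Substituting this into the Markov bound and choosing the optimal $t=\ln(1+\lam)$ yields the classical form
\[
\Pr[X-pN > \lam pN] \;\le\; \exp\!\bigl(pN\bigl(\lam-(1+\lam)\ln(1+\lam)\bigr)\bigr) \;=\; \l(\f{e^{\lam}}{(1+\lam)^{1+\lam}}\r)^{\!pN}.
\]

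The final step is to reduce the statement to the elementary inequality $(1+\lam)\ln(1+\lam)-\lam \ge \lam^2/2$ on the relevant range of $\lam$, which can be verified by setting $f(\lam)=(1+\lam)\ln(1+\lam)-\lam-\lam^2/2$, noting $f(0)=0$, and analyzing $f'$ (or expanding in a Taylor series). Substituting this inequality into the exponent gives $\Pr[X-pN>\lam pN]\le e^{-\lam^2 pN/2}$, as required.

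I expect nothing substantial to be an obstacle here: the exponential moment method together with independence handles the probability, and the only real work is the one-variable calculus inequality at the end, which is a textbook computation. Indeed, this is precisely why the result is cited from \cite{ProbMeth} rather than proved in the paper, and one could equally well derive it as a special case of a Bernstein-type inequality for sums of independent bounded random variables.
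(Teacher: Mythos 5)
Your exponential-moment computation is correct up to the bound $\Pr[X-pN>\lam pN]\le\bigl(e^{\lam}/(1+\lam)^{1+\lam}\bigr)^{pN}$: Markov applied to $e^{tX}$, the estimate $\E[e^{tX}]=(1-p+pe^{t})^{N}\le\exp(pN(e^{t}-1))$, and the choice $t=\ln(1+\lam)$ are all fine. The gap is the very last step: the ``textbook'' inequality you invoke, $(1+\lam)\ln(1+\lam)-\lam\ge\lam^{2}/2$, is false for \emph{every} $\lam>0$, not just outside some range. Indeed, $f(\lam)=(1+\lam)\ln(1+\lam)-\lam-\lam^{2}/2$ satisfies $f(0)=0$ and $f'(\lam)=\ln(1+\lam)-\lam<0$ for $\lam>0$ (equivalently, the Taylor expansion is $\lam^{2}/2-\lam^{3}/6+\cdots$), so the inequality holds with the opposite sign. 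Consequently the bound you derived, $\exp\bigl(-pN[(1+\lam)\ln(1+\lam)-\lam]\bigr)$, is strictly \emph{weaker} than $e^{-\lam^{2}pN/2}$, and your argument does not yield the stated estimate.

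This is not a repairable slip inside your approach: once you pass to $\E[e^{tX}]\le\exp(pN(e^{t}-1))$, the exponent $pN[(1+\lam)\ln(1+\lam)-\lam]$ is the optimum over $t$, so no choice of $t$ recovers the constant $\tfrac12$. In fact the proposition as literally stated cannot be proved at all: in the Poisson regime ($p\to0$ with $pN\to\infty$) the upper tail is $e^{-(1+o(1))pN[(1+\lam)\ln(1+\lam)-\lam]}$, which eventually exceeds $e^{-\lam^{2}pN/2}$ (for $\lam=1$ the true exponent is $(2\ln 2-1)pN\approx 0.386\,pN$ versus the claimed $0.5\,pN$). The constant $\tfrac12$ is correct for the \emph{lower} tail; for the upper tail the provable forms are, e.g., $(1+\lam)\ln(1+\lam)-\lam\ge\f{\lam^{2}}{2+2\lam/3}$, giving $\Pr[X-pN>\lam pN]\le e^{-\lam^{2}pN/(2+2\lam/3)}$, or $e^{-\lam^{2}pN/3}$ for $0<\lam\le1$. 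The statement quoted in the paper is a common but slightly over-strong paraphrase of the bound in the cited reference; any correct write-up should prove one of these weaker forms instead, and they suffice where the proposition is actually used (in Claim~\ref{cl:degree} one has $\lam\ge\log\Del$, and an exponent of order $\lam\log\lam\cdot pN$ is still far more than the required $2\log\Del$).
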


With this we can prove a version of Proposition~\ref{prop:coMatch} under the additional hypothesis that every $k$-set of $\bigcup_{i\le k} V_i$ is contained in either 0 or roughly $D$ edges.

\begin{prop}\label{prop:coMatchTech}
	 Let $k<r$, $0\le \be\le 1$, and let $\c{F}$ be a family of $r$-graphs such that $\ex(G,\c{P}_k(\c{F}))=\Om(\widetilde{\Delta}^{-\be}) e(G)$ for any $(r-k+1)$-graph $G$ with maximum degree at most $\widetilde{\Delta}$.  Let $H$ be an $r$-partite $r$-graph on $V_1\cup \cdots\cup V_r$ with maximum degree at most $\Del$ and let $D$ be such that $D\le \Del/\log \Del$ and such that if $\{u_1,\ldots,u_r\}\in E(H)$ with $u_i\in V_i$ for all $i$, then $D\le d_{H}(u_1,\ldots,u_k)<2D$.  In this case we have
	\begin{equation*}\ex(H,\c{F})=\Om\l(\f{(D)^{1-\be}}{\Del(\log \Del)^5}\r)\cdot e(H).\end{equation*}
\end{prop}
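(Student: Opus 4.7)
The plan is to apply Lemma~\ref{lem:LFree} with a random sub-matching of the $k$-graph $H_0:=\pa H[V_1\cup\cdots\cup V_k]$. The codegree hypothesis gives $e(H_0)=\Theta(e(H)/D)$ and $\Del(H_0)\le\Del/D$. We would construct $M$ by \emph{independent inclusion and alteration}: put each $K\in E(H_0)$ into a random set $I$ independently with probability $p:=D/\Del$, then set $M:=\{K\in I : K'\notin I\text{ for every conflicting }K'\ne K\}$. Since each edge has at most $k\Del/D$ conflicting edges and $p\cdot k\Del/D=O(1)$, we get $\Pr[K\in M]=\Theta(D/\Del)$, so $\E[|M|]=\Om(e(H)/\Del)$. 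Sampling all of $E(H_0)$ (rather than sparsifying a fixed maximum matching) is essential: the latter strategy would produce $\E[|M|]$ smaller by a factor of $D/\Del$, and that loss cannot be recovered downstream.

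Let $H^*:=\bigcup_{K\in M}B(K)$ with $B(K):=\{e\in E(H):K\sub e\}$, and $G^*:=\pa H^*[V_k\cup\cdots\cup V_r]$. Distinct $K\in M$ have distinct $V_k$-entries, so the projection $H^*\to G^*$ is an edge-bijection; in particular $e(G^*)=\sum_{K\in M}d_H(K)$ has expectation $\Om(De(H)/\Del)$, and a standard Paley--Zygmund argument (the second-moment correction is dominated by pairs $K,K'$ at graph-distance $\le 2$ in $H_0$) gives $e(G^*)=\Om(De(H)/\Del)$ with constant probability. The crucial step is bounding $\widetilde{\Del}(G^*)$: a matched $u_k\in V_k$ has $G^*$-degree $d_H(K)<2D$, and for $u_i\in V_i$ with $i>k$, using $M\sub I$,
\[d_{G^*}(u_i)\;\le\;\sum_{K\in N(u_i)}d_H(K\cup\{u_i\})\,\1[K\in I],\]
where $N(u_i):=\{K\in E(H_0):K\cup\{u_i\}\sub e\text{ for some }e\in E(H)\}$. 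This is a sum of independent Bernoullis with weights in $[0,2D]$, total weight $d_H(u_i)\le\Del$, and mean $\le pd_H(u_i)\le D$. Bernstein's inequality yields $\Pr[d_{G^*}(u_i)>CD(\log\Del)^2]\le\Del^{-\om(1)}$, and a union bound over the (at most $r\cdot e(H)$) vertices of positive degree gives $\widetilde{\Del}(G^*)=O(D(\log\Del)^2)$ with high probability.

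Intersecting the two events fixes an $M$ satisfying both estimates. The hypothesis then produces a $\c{P}_k(\c{F})$-free subgraph $G'\sub G^*$ with
\[e(G')\;=\;\Om\l((D(\log\Del)^2)^{-\be}\r)\,e(G^*)\;=\;\Om\l(\f{D^{1-\be}}{\Del(\log\Del)^{2\be}}\r)\,e(H),\]
comfortably inside the claimed $(\log\Del)^5$. Each edge of $G'$ lifts uniquely back to the edge of $H^*$ it came from (its $V_k$-coordinate identifies the matching edge $K\in M$, which fills in the $V_1,\ldots,V_{k-1}$ entries), yielding $H'\sub H^*$ with $e(H')=e(G')$. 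Since $\pa H'[V_1\cup\cdots\cup V_k]\sub M$ is a matching and $\pa H'[V_k\cup\cdots\cup V_r]=G'$ is $\c{P}_k(\c{F})$-free, Lemma~\ref{lem:LFree} implies that $H'$ is $\c{F}$-free. The main obstacle is the uniform degree control in the second paragraph: the naive idea of sparsifying a fixed maximum matching cannot reduce $\widetilde{\Del}$ below $\Theta(\Del)$ without simultaneously shrinking $e(G^*)$ by the same factor, whereas rebuilding $M$ from independent $I$-indicators makes $d_{G^*}(u_i)$ a clean sum of independent weighted Bernoullis, which Bernstein converts into the needed $\widetilde{\Del}=O(D\,\mathrm{polylog}(\Del))$.
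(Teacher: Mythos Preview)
Your overall strategy mirrors the paper's: sample the $k$-graph $H_0=\pa H[V_1\cup\cdots\cup V_k]$, control the degrees in the resulting $(r-k+1)$-graph, apply the hypothesis, and lift back via Lemma~\ref{lem:LFree}. The paper samples $H_0$ at rate $p=\f{D\log\Del}{\Del}$, removes hyperedges through high-degree vertices (together with all hyperedges sharing their $k$-set, to preserve the codegree floor), and only \emph{then} extracts a greedy matching; you fold the matching step into the sampling via alteration and use Bernstein in place of Chernoff. Either route is in principle workable.

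There is, however, a genuine gap in your degree-control step. You bound $\Pr[d_{G^*}(u_i)>CD(\log\Del)^2]\le\Del^{-\om(1)}$ for each fixed $u_i$ and then take a union bound over the ``at most $r\cdot e(H)$ vertices of positive degree''. But nothing in the hypotheses bounds $e(H)$---or the number of vertices of $H$---by any function of $\Del$: the host may well have $\exp(\Del)$ vertices, in which case $r\,e(H)\cdot\Del^{-\om(1)}$ is a useless bound. The event ``$\widetilde{\Del}(G^*)=O(D(\log\Del)^2)$'' thus need not hold with any positive probability, and you cannot intersect it with the Paley--Zygmund event.

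The paper avoids this by replacing the union bound with a first-moment count of bad \emph{hyperedges}: writing $U$ for the set of vertices exceeding the degree threshold and $Y$ for the hyperedges meeting $U$, one has $\E[|Y|]\le\sum_u\Pr[u\in U]\,d_H(u)$, and the handshake identity $\sum_u d_H(u)=r\,e(H)$ converts the per-vertex tail into $\E[|Y|]=O((\log\Del)\Del^{-2})\cdot e(H)$, which is negligible next to the expected size of the sampled hypergraph. One then deletes $Y$ and argues entirely in expectation that what remains is large. Your setup admits the same fix: rather than asking for a high-probability global event, bound $\E\bigl[e(G^*)-\sum_{u\in U}d_{G^*}(u)\bigr]$ directly using your Bernstein tail on each vertex (together with the deterministic bound $d_{G^*}(u)\le d_H(u)$), and pick a good realization. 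With that change the Paley--Zygmund step also becomes unnecessary---which is welcome, since the indicators $\1[K\in M]$ are positively correlated at distance~$2$ in $H_0$, and the second-moment computation you gesture at would need some care.
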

Before proving this, let us quickly show that this result implies Proposition~\ref{prop:coMatch}.

\begin{proof}[Proof of Proposition~\ref{prop:coMatch} assuming Proposition~\ref{prop:coMatchTech}]
	 Let $H$ be as in Proposition~\ref{prop:coMatch} with $V_1\cup \cdots \cup V_r$ an $r$-partition of $H$. Given $I=\{i_1,\ldots,i_k\}\sub [r]$, let $E_I\sub E(H)$ be the set of edges $\{u_1,\ldots,u_r\}$ such that $d_H(u_{i_1},\ldots,u_{i_k})\ge D$.  By the hypothesis of Proposition~\ref{prop:coMatch}, we have \[\half e(H)\le \l|\bigcup E_{I}\r|\le \sum |E_{I}|.\] 
	Thus $|E_I|\ge 2^{-r-1}e(H)$ for some $k$-set $I$, and we will assume $|E_{[k]}|\ge 2^{-r-1}e(H)$ without loss of generality.
	
	Let $E_j\sub E_{[k]}$ denote the set of edges $\{u_1,\ldots,u_r\}$ with $2^jD\le d_H(u_1,\ldots,u_k)<2^{j+1}D$.  The $k$-degree of any $k$-set of $H$ is at most $\Del$, so the $E_j$ sets with $0\le j\le \log \Del$ partition $E_{[k]}$.  By the pigeonhole principle there exists some $j'$ such that
	\[|E_{j'}|\ge \rec{1+\log \Del}|E_{[k]}|\ge \rec{2^r \log \Del}e(H).\]
	
	Let $H'\sub H$ consist of the edges $E_{j'}$ and let $D'=2^j D$.  By construction, if $\{u_1,\ldots,u_r\}\in E(H')$, then $D'\le d_{H'}(u_1,\ldots,u_k)<2D'$.  Lastly, let $\Del'=2 \Del \log \Del$.  Observe that $H'$ has maximum degree at most $\Del\le \Del'$ and that $D'\le \Del\le \Del'/\log \Del'$ for $\Del$ sufficiently large.  Thus $H'$ satisfies the conditions of Proposition~\ref{prop:coMatchTech}, so we have
	\begin{align*}\ex(H,\c{F})&\ge \ex(H',\c{F})=\Om\l(\f{(D')^{1-\be}}{\Del'(\log \Del')^5}\r)\cdot e(H')\\ &\ge \Om\l(\f{D^{1-\be}}{\Del (\log \Del)^6}\r)\cdot \rec{2^r \log \Del} e(H)=\Om\l(\f{D^{1-\be}}{\Del(\log \Del)^{7}}\r)\cdot e(H),\end{align*}
	where we used $(D')^{1-\be}\ge D^{1-\be}$ since $\be \le 1$ and $D'\ge D$. 
\end{proof}

We now prove Proposition~\ref{prop:coMatchTech}.
\begin{proof}[Proof of Proposition~\ref{prop:coMatchTech}]
	Let $H$ be as in the proposition statement.  We assume throughout that $\Del$ is at least as large as some sufficiently large constant, the result being trivial otherwise. Whenever we consider sets $\{u_1,\ldots,u_r\}$ we will always assume $u_i\in V_i$ for all $i$.  We define
	\[p=\f{D\log \Del}{\Del}\le 1.\]
	
	Let $G=\pa H\l[\bigcup_{i\le k} V_i\r]$.  For ease of presentation we refer to edges of $H$ as \textit{hyperedges} and edges of $G$ simply as edges, and in this proof $e$ will always refer to edges of $G$.  Define $G_p\sub G$ to be the $k$-graph obtained by keeping each edge of $G$ independently with probability $p$, and let $H_p\sub H$ comprise all the hyperedges which contain edges of $G_p$.
	\begin{claim}\label{cl:degree}
		For all $u\in V(H)$, \[\Pr[d_{H_p}(u)> 8D(\log \Del)^3]\le \f{2 \log \Del}{\Del^{2}}.\]
	\end{claim}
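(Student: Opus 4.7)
The plan is to realize $d_{H_p}(u)$ as a sum of independent weighted Bernoullis and concentrate it via Proposition~\ref{prop:Chern}.  For each $u \in V(H)$ and each edge $e$ of $G$, set
\[ w_{e,u} := |\{f \in E(H):\ e \sub f,\ u \in f\}|, \qquad X_e := \1[e \in G_p], \]
so that $d_{H_p}(u) = \sum_e w_{e,u} X_e$, the $X_e$ are independent $\mathrm{Bernoulli}(p)$, every $w_{e,u}$ is less than $2D$ by the uniform $k$-codegree hypothesis, and $\sum_e w_{e,u} = d_H(u) \le \Del$.  In particular $\E[d_{H_p}(u)] \le p\Del = D\log\Del$, so the task reduces to controlling a $\Theta((\log\Del)^2)$-factor deviation above the mean.

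I would split on whether the coordinate $j$ with $u \in V_j$ satisfies $j \le k$ or $j > k$.  If $j \le k$, then every $e$ with $w_{e,u}>0$ must contain $u$, forcing $w_{e,u} = d_H(e) \ge D$; hence $|E_u| \le d_H(u)/D \le \Del/D$ where $E_u = \{e : w_{e,u}>0\}$, and $d_{H_p}(u) < 2D \cdot Y$ for $Y := |E_u \cap G_p| \sim \Bin(|E_u|,p)$ with $\E[Y] \le \log\Del$.  The desired event is then contained in $\{Y > 4(\log\Del)^3\}$, a factor-$4(\log\Del)^2$ deviation above the mean, and Proposition~\ref{prop:Chern} yields a bound of order $\exp(-\Om((\log\Del)^5))$, far smaller than $2\log\Del/\Del^2$.

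If $j > k$, the weights $w_{e,u}$ may be anywhere in $[1, 2D]$ and the single-binomial reduction is too lossy.  Here I would bucket edges dyadically: set $T_s := \{e : 2^{s-1} \le w_{e,u} < 2^s\}$ for $1 \le s \le \lceil \log_2(2D)\rceil$, which gives $|T_s| \le 2^{1-s}\Del$ from $\sum_e w_{e,u} \le \Del$, and let $Y_s := |T_s \cap G_p| \sim \Bin(|T_s|,p)$ with $\E[Y_s] \le 2^{1-s}D\log\Del$.  Then $d_{H_p}(u) \le \sum_s 2^s Y_s$, and a union bound over the $O(\log\Del)$ buckets reduces the claim to showing $Y_s \le 8D(\log\Del)^2/2^s$ for each $s$.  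Across the full bucket range this threshold is at least $2(\log\Del)^2$ and exceeds $\E[Y_s]$ by a factor of at least $4\log\Del$; one checks that the Chernoff exponent $\lam^2 pN/2$ of Proposition~\ref{prop:Chern} is then at least $\Om((\log\Del)^3)$ irrespective of the value of $\E[Y_s]$, giving a per-bucket probability easily below $\Del^{-2}/\log\Del$.

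The main obstacle is the $j > k$ case: Proposition~\ref{prop:Chern} does not directly address weighted sums, so the dyadic bucketing is essential.  The crux is that the threshold $8D(\log\Del)^2/2^s$ remains $\ge 2(\log\Del)^2$ across the entire bucket range and is always a $4\log\Del$ factor above $\E[Y_s]$, so that Chernoff gives a strong enough per-bucket bound to survive the $O(\log\Del)$-fold union bound and recover the $2\log\Del/\Del^2$ stated in the claim.
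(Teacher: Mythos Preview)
Your plan is essentially the paper's proof: express $d_{H_p}(u)$ as a weighted sum $\sum_e w_{e,u}X_e$ of independent Bernoullis, bucket the weights dyadically, apply Chernoff to each bucket's count, and union-bound over the buckets. The paper does not split on whether $u$ lies in $\bigcup_{i\le k}V_i$; in your $j\le k$ case every relevant $e$ has $w_{e,u}\in[D,2D)$ and so falls into a single top bucket, making the split correct but unnecessary. There is one arithmetic slip in your reduction: since there are $\lceil\log_2(2D)\rceil$ buckets (which can exceed $\log\Delta$), the per-bucket bound $2^sY_s\le 8D(\log\Delta)^2$ only yields $\sum_s 2^sY_s\le 8D(\log\Delta)^2\cdot\lceil\log_2(2D)\rceil$, which can overshoot $8D(\log\Delta)^3$. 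The paper instead sets the per-bucket threshold at $2^{1-j}D(\log\Delta)^2$ (i.e.\ $4D(\log\Delta)^2/2^s$ in your indexing) and bounds the number of buckets by $2\log\Delta$, so that the constants close exactly.
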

		\textit{Proof.} Fix some $u\in V(H)$.  Given $e\in E(G)$, let $E_{e}$ be the set of hyperedges $e'\in E(H)$ which contain $e\cup \{u\}$, and let $\1_{e}$ be the indicator variable which is 1 if $e\in E(G_p)$ and 0 otherwise.  Thus $d_{H_p}(u)=\sum_{e}\l|E_{e}\r|\cdot \1_{e}$.  Moreover, if we let $E_j$ be the set of $e\in E(G)$ with $2^j\le |E_{e}|<2^{j+1}$ and let $S_j:=\sum_{e\in E_j}\1_{e}$, we find \[d_{H_p}(u)=\sum_{e} \l|E_{e}\r|\cdot\1_{e}<\sum_j 2^{j+1}\sum_{e\in E_j}\1_{e}=\sum_j 2^{j+1} S_j.\]
		Observe that we only have to consider $0\le j\le \log_2D\le 2\log (\Del)-1$ since each $e$ is contained in less than $2D$ hyperedges and $D\le \Del$ which is sufficiently large.  Thus to prove the result \textbf{}it will be enough to show $\Pr[S_j>2^{1-j} D(\log \Del)^2]\le 2 \Del^{-2}$ for all $j$ in this range and then to apply the union bound to each of these $2\log\Del $ events.
		
		Fix some $j$ and let $\al$ be such that $|E_j|=\al 2^{-j}\Del$, which means \[\E[S_j]=p\cdot \al 2^{-j}\Del=\al 2^{-j}D\log \Del.\]   Note that $\al\le 1$ since $\Del\ge d_H(u)\ge 2^j |E_j|$.  Using this and the Chernoff bound~\eqref{eq:Chern} gives \begin{align*}\Pr[S_j>2^{1-j}D(\log \Del)^2]&\le \Pr[S_j-\al 2^{-j} D\log \Del>2^{-j} D(\log \Del)^2]\\ &=\Pr[S_j-\E[S_j]> \al^{-1}(\log \Del)\cdot \E[S_j]]\\ &\le e^{-\al^{-1} 2^{-j-1}D(\log \Del)^3}\le \Del^{-2},\end{align*}
		where this last step used $\al\le 1,\ 2^{j}\le D$, and that $\Del$ is sufficiently large. \hfill $\blacksquare$\medskip
	
	We use Claim~\ref{cl:degree} to prove the following.
	\begin{claim}\label{cl:H'}
		There exists a subgraph $H'\sub H$ such that $H'$ has maximum degree at most $8D(\log \Del)^3$, every $k$-set $S\sub \bigcup_{i\le k} V_i$ has $d_{H'}(S)=0$ or $d_{H'}(S)\ge D$, and 
		\begin{equation}\label{eq:H'}e(H')\ge \f{D\log \Del}{2\Del}\cdot e(H).\end{equation}
	\end{claim}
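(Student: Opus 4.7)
The plan is to start from $H_p$ itself and apply two successive cleanup rounds to enforce the maximum degree and codegree requirements, then use the probabilistic method to control the edge count. Crucially, $H_p$ already satisfies the codegree dichotomy of the claim: for any $k$-set $S\sub \bigcup_{i\le k}V_i$, either $S\notin E(G_p)$ and $d_{H_p}(S)=0$, or $S\in E(G_p)$, in which case every hyperedge of $H$ with $k$-projection $S$ automatically lies in $H_p$, so $d_{H_p}(S)=d_H(S)\ge D$. Only the maximum degree condition may fail for $H_p$, and this is precisely what Claim~\ref{cl:degree} controls.

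To repair the degree bound, let $B=\{v\in V(H):d_{H_p}(v)>8D(\log\Del)^3\}$ be the set of \emph{bad} vertices and let $H_p'$ be obtained from $H_p$ by deleting every hyperedge incident to $B$. Now $H_p'$ has maximum degree at most $8D(\log\Del)^3$, but these deletions may have pushed some codegrees into the forbidden range $(0,D)$. Repair this with one more pass: for every $k$-set $S$ with $0<d_{H_p'}(S)<D$, delete all remaining hyperedges of $H_p'$ with $k$-projection $S$, and call the result $H'$. By construction $H'$ satisfies both the maximum-degree and codegree conditions of the claim on every outcome of $G_p$, so only the expected edge count remains.

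Since $\E[e(H_p)]=p\cdot e(H)=\f{D\log\Del}{\Del}\cdot e(H)$, it suffices to bound the total expected deletion by $\half\E[e(H_p)]$. For the first cleanup,
\[\E\l[|E(H_p)\sm E(H_p')|\r]\le \sum_{f\in E(H)}\sum_{v\in f}\Pr[f\in H_p,\,v\in B]\le r\cdot e(H)\cdot \f{2\log\Del}{\Del^2}\]
by Claim~\ref{cl:degree} and the trivial bound $\Pr[f\in H_p,\,v\in B]\le \Pr[v\in B]$. For the second cleanup, a $k$-set $S$ can be affected only if it lost at least one hyperedge in the first cleanup (since $d_{H_p}(S)\in\{0\}\cup[D,2D)$), so the number of affected $S$ is bounded by $|E(H_p)\sm E(H_p')|$, and each such $S$ contributes fewer than $D$ further deletions. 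Combining, the total expected loss is $O(rD\log\Del/\Del^2)\cdot e(H)$, which is at most $\half \E[e(H_p)]$ once $\Del$ is large enough in terms of $r$, and the probabilistic method delivers the desired $H'$. The main subtlety is precisely this coupling between the two cleanups: a single bad vertex might in principle damage many projections at once, but because each damaged projection is witnessed by at least one first-round deletion, the second-round loss never exceeds the first by more than a factor of $D$.
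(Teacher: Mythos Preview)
Your proof is correct and follows essentially the same approach as the paper. The paper packages the two cleanups into a single deletion set $Z$ (all hyperedges whose $k$-projection coincides with that of some hyperedge touching a high-degree vertex) and bounds $|Z|\le 2D|Y|$, whereas you perform the equivalent operation in two sequential passes and obtain the same $O(D|Y|)$ total loss; the underlying idea and the expectation calculation are identical.
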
		
	\textit{Proof.} Let $U$ denote the set of vertices $u$ with $d_{H_p}(u)> 8D(\log \Del)^{3}$ and let $\1_u$ be the indicator variable which is 1 if $u\in U$ and 0 otherwise.  Let $Y$ denote the set of hyperedges of $H_p$ which contain at least one vertex in $U$, and we think of $Y$ as a set of ``bad'' hyperedges.  Observe that \[|Y|\le \sum_{u\in U} d_{H_p}(u)=\sum_u \1_u d_{H_p}(u).\]  Using this, Claim~\ref{cl:degree} and $d_{H_p}(u)\le d_H(u)$ for all $u$, we conclude that \begin{equation}\label{eq:Y}\E[|Y|]\le \f{2 \log \Del}{\Del^{2}}\sum_u d_H(u)=\f{2r \log \Del}{\Del^{2}}\cdot e(H).\end{equation} 
	
	Define \[Y'=\{e\cap \bigcup_{i\le k} V_i:e\in Y\},\hspace{2em} Z=\{e\in E(H):e\cap \bigcup_{i\le k} \in Y'\}.\]  That is, $Y'$ consists of edges of $G$ that are contained in the ``bad'' hyperedges $Y$, and $Z$ consists of the hyperedges of $H$ which contain edges of $Y'$. Observe that $Y\sub Z$ and \begin{equation}\label{eq:Z}|Z|\le 2D|Y'|\le 2D|Y|,
	\end{equation} 
	since each $k$-set of $H_p\sub H$ is  contained at most $2D$ hyperedges.  Let $H'_p\sub H_p$ be the subgraph obtained by deleting every edge in $Z$.  In particular note that because $Y\sub Z$, $H'_p$ has maximum degree at most $8 D(\log \Del)^3$.  Also by definition of $Z$, every $k$-set $S\sub \bigcup_{i\le k} V_i$ has  $d_{H'_p}(S)=0$ or $d_{H'_p}(S)\ge D$, depending on whether $S\in Y'$ or not.  It thus suffices to show that in expectation $H'_p$ has at least as many edges as in \eqref{eq:H'}.  And indeed, each hyperedge of $H$ is in $H_p$ with probability $p$, so we have by \eqref{eq:Y} and \eqref{eq:Z} \begin{align*}\E[e(H'_p)]=\E[e(H_p)-|Z|]&\ge \f{D \log \Del}{\Del}\cdot e(H)-\f{4r D\log \Del}{\Del^2}\cdot e(H)\nonumber\\ &\ge \f{D \log \Del}{2\Del}\cdot e(H),\end{align*}
	where this last step used $\Del$ being sufficiently large. \hfill $\blacksquare$\medskip
	
	Let $H'$ be as in Claim~\ref{cl:H'} and let $G'=\pa H'\l[\bigcup_{i\le k} V_i\r]$. The hypergraph $H'$ has maximum degree at most $8D (\log \Del)^3$ and every $k$-set in $\bigcup_{i\le k} V_i$ has $k$-degree 0 or at least $D$, so $G'$ has maximum degree at most $8(\log \Del)^3$. Similarly $e(G')\ge\half D^{-1} e(H')$ since each edge of $G'\sub G$ is contained in at most $2D$ hyperedges of $H'\sub H$.  By Lemma~\ref{lem:match} there exists a matching $M\sub G'$ with \begin{equation}\label{eq:M}|M|\ge \f{e(H')}{16 k D (\log \Del)^{3}} =\Om\l(\f{1}{\Del (\log \Del)^2}\r)\cdot e(H),\end{equation}
	 where this last step used \eqref{eq:H'}. 
	
	Let $G_M$ be the $(r-k+1)$-graph on $M\cup \bigcup_{i>k}V_i$
	which contains the edge $\{e,u_{k+1},\ldots,u_r\}$ if and only if $e\cup \{u_{k+1},\ldots,u_r\}\in E(H')$.  By \eqref{eq:M} and the fact that each $k$-set of $\bigcup_{i\le k} V_i$ has $k$-degree 0 or at least $D$ in $H'$, we have \[e(G_M)\ge D|M|=\Om\l(\f{D}{\Del(\log \Del)^{2}}\r) \cdot e(H).\] Each $M$ vertex of $G_M$ has degree at most $2D$ since each of these $k$-sets are contained in at most $2D$ hyperedges of $H$, and each $u\in \bigcup_{i>k}V_i$ has degree at most $8D(\log \Del)^3$ by construction of $H'$.  By the hypothesis of the proposition, there exists a $\c{P}_k({\c{F}})$-free subgraph $G''\sub G_M$ with \begin{align}e(G'')&=\ex(G_M,\c{P}_k(\c{F}))= \Om(D^{-\be} (\log \Del)^{-3\be}) \cdot \Om\l(\f{D}{\Del(\log \Del)^{2}}\r) \cdot e(H)\nonumber\\&=\Om\l(\f{D^{1-\be}}{\Del(\log \Del)^{5}}\r)\cdot e(H),\label{eq:coG'}\end{align}
	where this last step used $\be\le 1$.
	
	Finally, let $H''\sub H'$ be the subgraph which contains the hyperedge $e\cup \{u_{k+1},\ldots,u_r\}$ if and only if $\{e,u_{k+1},\ldots,u_r\}$ is an edge of $G''$.   Note that $e(H'')=e(G'')$, so by \eqref{eq:coG'} we will be done if we can show that $H'$ is $\c{F}$-free.  Indeed, $\pa H''\l[\bigcup_{i\le k} V_i\r]\sub M$ is a matching and $\pa H''\l[\bigcup_{i\ge k} V_i\r]\cong G''$ is $\c{P}_k(\c{F})$-free, so the result follows from Lemma~\ref{lem:LFree}.
\end{proof}

\subsection{Examples of $\c{P}_k(\c{F})$}
To apply Proposition~\ref{prop:coMatch}, we need to understand $\c{P}_k(\c{F})$ for families of cycles $\c{F}$. The simplest case is the loose cycle.
\begin{lem}\label{lem:linkL}
	For all $\ell\ge 3$, we have $\c{P}_2(C_\ell^{3})=\emptyset$.
\end{lem}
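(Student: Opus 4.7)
The plan is to argue directly from the definitions: $\c{P}_2(C_\ell^3)$ is the set of graphs of the form $\partial C_\ell^3[V_2 \cup V_3]$ arising from a 3-partition $(V_1,V_2,V_3)$ of $C_\ell^3$ whose first-two-parts shadow is a matching, so it suffices to rule out the existence of such a 3-partition. I would start by fixing notation for the loose cycle: write $e_i = \{v_{i-1}, w_i, v_i\}$ for $i \in [\ell]$ (indices mod $\ell$), where the $v_i$ are the distinct intersection vertices guaranteed by the definition of $C_\ell^3$ and each $w_i$ is the unique remaining vertex of $e_i$.

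Next I would recall that in any 3-partition of a 3-graph, every edge has exactly one vertex in each part (this is what makes $\partial F[\bigcup_{i\in I} V_i]$ a $|I|$-graph for each $I$). Applying this to $e_i = \{v_{i-1}, w_i, v_i\}$ tells me that $v_{i-1}$ and $v_i$ must lie in different parts for every $i$. In particular, consecutive intersection vertices along the cycle $v_1, v_2, \dots, v_\ell, v_1$ receive different ``colors'' under the partition.

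Then I would analyze what is required for $\partial C_\ell^3[V_1 \cup V_2]$ to be a matching. Since each $e_i$ has one vertex in $V_1$ and one in $V_2$, the trace $e_i \cap (V_1 \cup V_2)$ is a 2-set. For these $\ell$ traces to be pairwise disjoint, the shared vertex $v_i = e_i \cap e_{i+1}$ must not lie in $V_1 \cup V_2$ (otherwise $v_i$ is in both traces), so $v_i \in V_3$ for every $i \in [\ell]$. But this contradicts the previous paragraph's conclusion that $v_{i-1}$ and $v_i$ must lie in different parts. Hence no admissible 3-partition exists, and $\c{P}_2(C_\ell^3) = \emptyset$.

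The argument is genuinely elementary; I do not anticipate any real obstacle. The only point that could use care is ensuring the convention about $\c{R}(F)$: namely, that an $r$-partition of an $r$-graph is required to be ``transverse'' in the sense that each edge meets each part in exactly one vertex. Once that convention is fixed, the proof is two short observations about shared vertices along a loose cycle.
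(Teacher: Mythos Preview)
Your proposal is correct and takes essentially the same approach as the paper. Both arguments show that the matching condition on $V_1\cup V_2$ forces every ``joint'' vertex $v_i$ into $V_3$ (equivalently, the paper phrases this as the $V_1$- and $V_2$-vertices of each edge having degree~$1$), which is impossible since each edge of $C_\ell^3$ contains two such joint vertices.
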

\begin{proof}
	Let $V_1\cup V_2\cup V_3$ be a tripartition of $C_\ell^{3}$ such that $\pa C_\ell^{3}[V_1\cup V_2]$ is a matching.  If $\{u_1,u_2,u_3\}\in E(C_\ell^{3})$ with $u_i\in V_i$ for all $i$, then this must be the only edge containing $u_1$ since any other edge must also contain $u_2$, contradicting $C_\ell^{3}$ being linear.  Similarly it is the only edge containing $u_2$, so  $\{u_1,u_2,u_3\}$ contains two vertices of degree 1 which does not hold for any edge of $C_\ell^{3}$, a contradiction.
\end{proof}

We recall that $S$ is a sunflower if there exists a set $K$ called the kernel of $S$ such that $e\cap e'=K$ for all distinct $e,e'\in E(S)$.  We let $\c{S}_\ell^{r}$ denote the set of $r$-uniform sunflowers on $\ell$ edges.  We let $\BB{\ell}{r}:=\B{\ell}{r}\sm \c{S}_\ell^r$ and $\BB{[\ell]}{r}=\bigcup_{\ell'=2}^\ell \BB{\ell'}{r}$, which agrees with our definition in Theorem~\ref{thm:BGen}.  

We define $\S{\ell}{r}$ to be the set of $r$-graphs $\widetilde{S}$ that can be obtained by adding an edge $e$ to some $S\in \c{S}_{\ell}^r$ such that $K\cap e\ne \emptyset$ where $K$ is the kernel of $S$, and such that $S\cup e$ is not a sunflower.  Note that we do not require $V(\widetilde{S})=V(S)$, that is, $e$ might include vertices which are not in the original sunflower $S$.  For any $\widetilde{S}\in \S{\ell}{r}$, there exists an edge $e$ such that $\widetilde{S}-e$ is a sunflower, and we call $e$ an \textit{extra edge} of $\widetilde{S}$ (which may not be unique), and we define the \textit{kernel} of $\widetilde{S}$ (with respect to the extra edge $e$) to be the kernel of the sunflower $\widetilde{S}-e$. Finally, we define $\S{[\ell]}{r}=\bigcup_{\ell'=2}^{\ell} \S{\ell'}{r}$.

\begin{lem}\label{lem:linkB}
	Every element of $\c{P}_k(\BB{[\ell]}{r}\cup \S{[\ell]}{r})$ contains an element of $\BB{[\ell]}{r-k+1}\cup \S{[\ell]}{r-k+1}$ as a subgraph.
\end{lem}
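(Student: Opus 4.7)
My plan is to project $F$ onto a canonical $(r-k+1)$-graph $G:=\pa F[V_R]$ via a map that respects the matching condition, and then trace how the Berge cycle or sunflower structure of $F$ descends to $G$.  Write $V_L:=\bigcup_{i\le k}V_i$, $V_R:=\bigcup_{i\ge k}V_i$, and $e_L:=e\cap V_L$, $e_R:=e\cap V_R$ for each edge $e$ of $F$.  I define $\phi:V(F)\to V_R$ by setting $\phi(v):=v$ for $v\in V_R$, and for $v\in V_L\setminus V_R$ by the following observation: the matching condition on $\pa F[V_L]$ forces all edges of $F$ containing $v$ to share the same $V_L$-trace (hence the same $V_k$-vertex), so we may set $\phi(v)$ to equal this common $V_k$-vertex.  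A routine check shows $\phi(e)=e_R$ for every edge $e$.  For any two distinct edges $e\ne e'$ of $F$, matching gives either $(e)_L=(e')_L$ (so $e,e'$ must differ in $V_R$) or $(e)_L\cap (e')_L=\emptyset$ (so their $V_k$-vertices differ); either way $e_R\ne e'_R$, hence $e\mapsto e_R$ is a bijection $E(F)\to E(G)$.

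Consider first $F\in\BB{\ell'}{r}$ with core set $v_1,\ldots,v_{\ell'}$ on edges $e_1,\ldots,e_{\ell'}$.  Then $\phi(v_i)\in(e_{i-1})_R\cap (e_i)_R$ for each $i$.  If the $\phi(v_i)$'s are distinct, they form a core set for a Berge $\ell'$-cycle in $G$ on the distinct edges $(e_i)_R$.  To show this cycle is not a sunflower, I would partition $\{e_1,\ldots,e_{\ell'}\}$ into equivalence classes under the relation $(e_i)_L=(e_j)_L$, which by matching coincides with the relation ``$L$-traces not disjoint.''  If all $L$-traces are pairwise disjoint, then $e_i\cap e_j=(e_i)_R\cap(e_j)_R$, so a sunflower structure in $G$ would lift to one in $F$; if all $L$-traces coincide, then $F$ itself is a sunflower; and in the mixed case, the shared $V_k$-vertex $u$ of an equivalence class lies in $(e_i)_R\cap(e_j)_R$ for same-class pairs but not for different-class pairs, so the pairwise intersections in $G$ cannot all be equal.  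Since $F\in\BB{\ell'}{r}$ rules out the first two cases, $G\in\BB{\ell'}{r-k+1}$.

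If some $\phi(v_i)=\phi(v_j)$ with $j-i$ minimal, then $(e_i)_R,\ldots,(e_{j-1})_R$ on core $\phi(v_i),\ldots,\phi(v_{j-1})$ form a shorter Berge $\ell''$-cycle in $G$ with $\ell'':=j-i<\ell'$.  If this sub-cycle is not a sunflower, it furnishes an element of $\BB{[\ell]}{r-k+1}$.  Otherwise, with sunflower kernel $K'\ni\phi(v_i)=\phi(v_j)$, I append the edges $(e_t)_R$ for $t\notin\{i,\ldots,j-1\}$ one at a time, starting with $t=j$, whose edge contains $\phi(v_j)=\phi(v_i)\in K'$; at each step either the sunflower property is broken (yielding an $\S{\ell''}{r-k+1}$-structure) or it is preserved and we continue.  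If every appended edge preserves the sunflower structure, then $(e_1)_R,\ldots,(e_{\ell'})_R$ is a sunflower in $G$, and the $L$-trace analysis of the previous paragraph then forces $F$ to be a sunflower, contradicting $F\in\BB{\ell'}{r}$.

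Finally, if $F\in\S{\ell'}{r}$, write $F=S\cup\{e\}$ with $S$ a sunflower of kernel $K$ and $e$ the extra edge meeting $K$.  Then $\phi(S)$ is a sunflower in $G$ with kernel $K_R:=K\cap V_R$, and $\phi(e)=e_R$ meets $K_R$ (any $v\in e\cap K$ yields $\phi(v)\in e_R\cap K_R$ via the $V_k$-vertex analysis when $v\in V_L\setminus V_R$).  If $\phi(F)$ is not a sunflower we have $\phi(F)\in\S{\ell'}{r-k+1}$ and we are done.  Otherwise, I case on whether $K\cap V_L$ is empty: if nonempty, the matching and a short computation force $e\cap e_i=K$ for every $e_i\in S$, giving $F$ a sunflower (contradiction); if empty, the matching forces all $(e_i)_L$ for $e_i\in S$ to be pairwise disjoint, and tracking the $V_k$-vertex $u$ of the relevant $(e_{i_0})_L$ (where $e_L=(e_{i_0})_L$, if any) shows $u\in (e_{i_0})_R\cap e_R$ but $u\notin K_R$, contradicting $\phi(F)$ being a sunflower.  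The main obstacle I expect is the $\phi$-collision case (Subcase~1b), where we must control sunflower collapses under successive edge appending; the uniform lever throughout the proof is that $V_k$-vertices distinguish sunflower patterns in $G$ from those in $F$.
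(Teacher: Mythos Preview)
Your projection $\phi:V(F)\to V_R$ and the edge bijection $e\mapsto e_R$ are set up correctly, and your treatment of $F\in\S{\ell'}{r}$ is essentially sound (the ``short computation'' in the $K\cap V_L\ne\emptyset$ sub-case goes through once you note that the $V_k$-vertex of the common $L$-trace $L_0$ lies in $K_R$ but cannot lie in $e_R$ unless $e_L=L_0$).  Handling general $k$ directly via $\phi$ is in fact cleaner than the paper's route, which first reduces to $k=2$ using $\c P_k(F)\subseteq\c P_{k-1}(\c P_2(F))$.

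The collision case (your Subcase~1b) has a genuine gap, however.  A minor point first: the minimal collision distance can be $1$, i.e.\ $\phi(v_i)=\phi(v_{i+1})$ (take $v_i\in V_1$, $v_{i+1}\in V_2$ in the same $L$-trace), and then your ``shorter Berge $\ell''$-cycle'' is a single edge.  The real problem is the appending step.  Only the \emph{first} appended edge $(e_j)_R$ is guaranteed to meet the kernel $K'$, via $\phi(v_j)=\phi(v_i)\in K'$.  For $(e_{j+1})_R$ you only know $\phi(v_{j+1})\in(e_j)_R\cap(e_{j+1})_R$; if $\phi(v_{j+1})$ lies in the petal $(e_j)_R\setminus K'$, then $(e_{j+1})_R$ may be disjoint from $K'$.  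Adding such an edge can break the sunflower property without producing an element of $\S{[\ell]}{r-k+1}$, since that family requires the extra edge to meet the kernel, and there is no Berge-triangle fallback because $(e_{j-1})_R\cap(e_{j+1})_R$ can be empty.  Your terminal fallback (``if all of $G$ is a sunflower then so is $F$'') is therefore never reached and the argument stalls.  (A related slip in the no-collision sub-case: ``if all $L$-traces coincide, then $F$ itself is a sunflower'' is false; what is true, and sufficient, is that then $G$ is a sunflower iff $F$ is, since pairwise intersections in $F$ and in $G$ differ by the fixed set $L_0\setminus V_k$.)

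The paper repairs this with an induction on $\ell'$ (equivalently, choosing $\ell'$ minimal among putative counterexamples $F\in\BB{\ell'}{r}$).  A collision splits $F$ into two shorter Berge cycles $F'=\{e_1,\dots,e_{i-1}\}$ and $F''=\{e_i,\dots,e_{\ell'}\}$; by minimality each must be a sunflower (else its projection already contains the desired structure), and both kernels contain the collided vertex $v_1$.  Hence $v_1$ lies in \emph{every} edge of $F$.  Now the appending argument, carried out in $F$ rather than in $G$, works: the first $e_{j+1}$ making $\{e_1,\dots,e_{j+1}\}$ not a sunflower meets the kernel at $v_1$, so this subgraph lies in $\S{[\ell]}{r}$, and its projection lies in $\S{[\ell]}{r-k+1}$ by the argument of your final paragraph.
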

\begin{proof}
	We prove the result for any fixed $r\ge 2$ by induction on $k$.  We start with $k=2$, and for ease of presentation we define $F_\pa:=\pa F\l[\bigcup_{i\ge 2}V_i\r]$ whenever $F$ has an $r$-partition $V_1\cup \cdots \cup V_r$ that is clear from context.  
	\begin{claim}\label{cl:sunflower}
		If $F\in \S{\ell'}{r}$, then $\c{P}_2(F)\sub \S{\ell'}{r-1}$.
	\end{claim}
	\textit{Proof.}  Let $e$ be an extra edge of $F$ and $K$ the kernel of $F':=F-e$. Let $V_1\cup \cdots \cup V_r$ be an $r$-partition of $F$ such that $V_1\cup V_2$ induce a matching.  Observe that $F'_\pa$ is a sunflower with kernel $K\sm V_1$, so $F_\pa$ is this sunflower together with the edge $e_\pa:=e\sm V_1$.  We claim that $e_\pa$ intersects $K\sm V_1$.  Indeed, by definition of $ \S{\ell'}{r}$ we have $u\in K\cap e$ for some vertex $u$, and if $u\notin V_1$ then the subclaim holds.  If $u\in V_1$, then $V_1\cup V_2$ inducing a matching implies that there is some $u'\in V_2$ such that every edge containing $u$ also contains $u'$.  In particular, $u'\in e_\pa \cap (K\sm V_1)$, proving the subclaim.
	
	It remains to show that $F_\pa$ is not a sunflower. By definition of $F$, there exists an edge $e'\ne e$ such that $e\cap e'\ne K$, and we will be done if we can show $(e\cap e')\sm V_1\ne K\sm V_1$.    Assume for the sake of contradiction that $(e\cap e')\sm V_1= K\sm V_1$, and this together with $e\cap e'\ne K$ implies that there exists $u_1\in V_1$ such that $u_1$ is in exactly one of $e\cap e'$ or $K$.  If $u_1\in e\cap e'$, then $V_1\cup V_2$ inducing a matching implies that there is some $u_2\in V_2$ such that every edge either contains both of $u_1,u_2$ or neither of them.   In particular, we have $u_2\in e\cap e'$ and $u_2\notin K$ since $u_1\notin K$, contradicting $(e\cap e')\sm V_1=K\sm V_1$.  Thus we must have $u_1\in K$, which similarly implies the existence of some $u_2\in K\cap V_2$ with $u_2\notin e\cap e'$, a contradiction.  We conclude that $F_\pa$ is not a sunflower, and hence $F_\pa\in \S{\ell'}{r-1}$ as desired. \hfill $\blacksquare$\medskip

	Recall that $A=\{v_1,\ldots,v_{\ell'}\}\sub V(F)$ is a core set if there exist edges $e_1,\ldots,e_{\ell'}\in E(F)$ with $v_i,v_{i+1}\in e_i$ for all $1\le i<\ell$ and $v_1,v_{\ell'}\in e_{\ell'}$.
	\begin{claim}\label{claim1}
		Let $\hat{F}\in \BB{\ell'}{r}$ with $\hat{A}=\{v_1,\ldots,v_{\ell'}\}$ a core set of $\hat{F}$, and let $V_1\cup \cdots \cup V_r$ be an $r$-partition of $\hat{F}$ such that $V_1\cup V_2$ induce a matching $M$.  If $\{v_i,v_j\}\notin M$ for all $i,j$, then $\hat{F}_\pa \in \BB{\ell'}{r-1}$.
	\end{claim}
	\textit{Proof.} For $\{x,y\}\in M$, define $\bar{x}=y$ and $\bar{y}=x$.  Let $w_i=v_i$ if $v_i\notin V_1$ and $w_i=\bar{v}_i$ otherwise. By the hypothesis of the claim, $\bar{A}:=\{w_1,\ldots,w_{\ell'}\}$ consists of $\ell'$ distinct vertices of $V(\hat{F}_\pa)=V_2\cup \cdots \cup V_r$.  Since every edge containing $v_i$ also contains $\ol{v_i}$, this defines a core set for $\hat{F}_\pa$, and hence $\hat{F}_\pa\in \B{\ell'}{r-1}$.
	
	It remains to show that $\hat{F}_\pa$ is not a sunflower.  Assume for contradiction that it is a sunflower with kernel $K$.  If there exists $u_2\in K\cap V_2$, then because $V_1\cup V_2$ induce a matching and every edge of $F_\pa$ contains $u_2$, there exists some $u_1\in V_1$ contained in every edge of $F$, so $F$ is a sunflower with kernel $K\cup \{u_1\}$, contradicting that $\hat{F}\in \BB{\ell'}{r}$.  Thus every vertex in  $V_2$ is in exactly one edge of the sunflower $\hat{F}_\pa$, and again $V_1\cup V_2$ inducing a matching implies that each vertex of $V_1$ is contained in exactly one edge.  We conclude that $\hat{F}$ is a sunflower with kernel $K$, a contradiction. \hfill $\blacksquare$\medskip
	
	We now return to the proof of Lemma~\ref{lem:linkB}.  Assume for contradiction that there exists some $2\le \ell'\le \ell$ and $F\in  \BB{\ell'}{r}$ which has an $r$-partition $V_1\cup \cdots \cup V_r$ such that $V_1\cup V_2$ induce a matching $M$ and such that $F_\pa$ does not contain an element of $\BB{[\ell']}{r-1}\cup \S{ [\ell']}{r-1}$ as a subgraph.  We further choose $\ell'$ to be the smallest integer such that there exists such an $F\in  \BB{\ell'}{r}$.  Observe that $\BB{2}{r}$ is empty, so $\ell'\ge 3$.  Let $\{v_1,\ldots,v_{\ell'}\}$ be a core set of $F$ with respect to its edges $e_1,\ldots,e_{\ell'}$.  
	
	If $\{v_i,v_j\}\notin M$ for all $i\ne j$, then Claim~\ref{claim1} shows that $F_\pa\in \BB{\ell'}{r-1}$, a contradiction.  Thus by reindexing we will assume  $\{v_{1},v_{i}\}\in M$ for some $i>1$.

	\begin{claim}\label{cl:twoSuns}
			Let $F'\sub F$ consist of the edges $e_{1},\ldots,e_{i-1}$ and let $F''=F\sm F'$ consist of the edges $e_{i},\ldots,e_{\ell'}$.  Then $F',F''$ are sunflowers (possibly with one edge) which have kernels $K',K''$ containing $v_1$.
	\end{claim}
	\textit{Proof.} We only prove the result for $F'$, the proof for $F''$ being completely analogous.  If $i=2$ then $F'$ has a single edge which contains $v_1$, so we will assume $i>2$.  We claim that $A'=\{v_1,\ldots,v_{i-1}\}$ is a core set of $F'$ with respect to $\{e_1,\ldots,e_{i-1}\}$.  Indeed, because $\{v_1,\ldots,v_{\ell'}\}$ is a core set with respect to $\{e_1,\ldots,e_{\ell'}\}$, we automatically have $v_j,v_{j+1}\in e_j$ for $1\le j\le i$, so $A'$ will be a core set provided $v_{1}\in e_{i-1}$.  This follows since $v_{i-1}\in e_{i-1}$ and $\{v_{1},v_{i-1}\}\in M$ implies that every edge containing $v_{i-1}$ also contains $v_{1}$.
	
	Thus $F\in \B{i-1}{r}$.  If we assume for contradiction that $F'$ is not a sunflower, then $F'\in \BB{i-1}{r}$.  By the minimality of $\ell'>i-1$, we have that $F'_\pa$ contains an element of $\BB{[\ell']}{r-1}\cup \S{[\ell']}{r-1}$ as a subgraph.  This will also be a subgraph of $F_\pa \supseteq F'_\pa$, a contradiction.  Thus it must be that $F'$ is a sunflower.  Moreover, because $v_1$ is in a core set of $F'$, it must have degree at least two in $F'$, and hence it must be in its kernel $K'$. \hfill $\blacksquare$\medskip
	
	By Claim~\ref{cl:twoSuns}, we see that $F$ is the union of two sunflowers $F',F''$ intersecting at a common vertex $v_1\in K'\cap K''$, so in particular every edge in $F$ contains $v_1$.  Since $\ell'\ge 3$ we will assume without loss of generality that $F'$ has at least two edges, i.e. that $i\ge 3$.  Let $F^{(j)}\sub F$ be the subgraph with edge set $\{e_1,\ldots,e_j\}$.  Since $F^{(\ell')}=F$ is not a sunflower and $F^{(i-1)}=F'$ is, there exists a largest $j$ with $i-1\le j<\ell'$ such that $F^{(j)}$ is a sunflower on at least two edges.  Observe that $F^{(j)}$ has kernel $K$ since $F'\sub F^{(j)}$ has this kernel, and that $F^{(j+1)}\in \S{j+1}{r}\sub \S{[\ell]}{r}$ since $v_1\in e_{j+1}\cap K$.  By Claim~\ref{cl:sunflower} we have that $F_\pa^{(j+1)}\sub F_\pa$ contains an element of $\S{[\ell]}{r-1}$ as a subgraph, a contradiction. 
	
	In total we conclude that if $F\in \BB{[\ell]}{r}\cup \S{[\ell]}{r}$, then every element of $\c{P}_2(F)$ has a subgraph in $\BB{[\ell]}{r-1}\cup \S{[\ell]}{r-1}$.  This proves the lemma when $k=2$.  Assume that we have proven the lemma up to some value $k>2$.  Observe that for all $F$ we have $\c{P}_k(F)\sub \c{P}_{k-1}(\c{P}_2(F))$. By the $k=2$ case of the lemma, if $F\in \BB{[\ell]}{r}\cup \S{[\ell]}{r}$, then every $F_2\in \c{P}_2(F)$ has a subgraph $F'_2\in \BB{[\ell]}{r-1}\cup \S{[\ell]}{r-1}$, and by the inductive hypothesis every element of $\c{P}_{k-1}(F_2')$ contains a subgraph $F_2''$ in $\BB{[\ell]}{r-k+1}\cup \S{[\ell]}{r-k+1}$.  Thus every element of $\c{P}_{k-1}(\c{P}_2(F))\supseteq \c{P}_k(F)$ contains a subgraph in $\BB{[\ell]}{r-k+1}\cup \S{[\ell]}{r-k+1}$, which completes the inductive step of the proof, giving the desired result.
\end{proof}
We make one more observation about $\S{[\ell]}{r}$.
\begin{lem}\label{lem:nonlinSun}
	Every element of $\S{[\ell]}{r}$ is non-linear.
\end{lem}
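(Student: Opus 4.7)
The plan is to split on the size of the kernel of the underlying sunflower. Let $\widetilde{S}\in\S{\ell'}{r}$ for some $2\le \ell'\le \ell$, and write $\widetilde{S}=S\cup\{e\}$ where $S\in\c{S}_{\ell'}^r$ is a sunflower with kernel $K$, where $e$ is an extra edge with $e\cap K\ne\emptyset$, and where $\widetilde{S}$ is not a sunflower.

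First, since $e\cap K\ne\emptyset$, we have $|K|\ge 1$. If $|K|\ge 2$, then any two edges $f_1,f_2\in S$ satisfy $f_1\cap f_2=K$, so $|f_1\cap f_2|\ge 2$ and $\widetilde{S}\supseteq S$ is already non-linear.

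The remaining case is $|K|=1$, say $K=\{v\}$, so $v\in e$. Then for every $f\in S$ we have $v\in e\cap f$. The main observation is that if we had $|e\cap f|=1$ for every $f\in S$, then $e\cap f=\{v\}$ for all $f\in S$, and combined with the fact that distinct edges of $S$ intersect in $\{v\}$, this would make $\widetilde{S}$ a sunflower with kernel $\{v\}$, contradicting the hypothesis that $\widetilde{S}$ is not a sunflower. Hence there must exist some $f\in S$ with $|e\cap f|\ge 2$, which shows that $\widetilde{S}$ is non-linear.

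The only subtle step is the second case, where we must rule out the possibility that $e$ meets each edge of $S$ in exactly $\{v\}$; this is handled by the contradiction with the defining assumption that $\widetilde{S}$ is not a sunflower. The first case is essentially immediate from the definition of a sunflower with a kernel of size at least two.
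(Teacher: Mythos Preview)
Your proof is correct and follows essentially the same approach as the paper: both split on whether $|K|\ge 2$ (where the sunflower itself is already non-linear) or $|K|=1$ (where one argues that if $e$ met every petal only in $K$, then $\widetilde{S}$ would be a sunflower, a contradiction). The arguments are effectively identical.
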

\begin{proof}
	Let $\widetilde{S}\in \S{[\ell]}{r}$ have extra edge $e$ and kernel $K$.  If $|K|\ge 2$, then in particular at least two edges of $\widetilde{S}$ share a common pair of vertices, so we will assume $|K|\le 1$.  We actually must have $|K|=1$ since $e\cap K\ne \emptyset$.  If every edge $e'\in \widetilde{S}\sm e$ has $e'\cap e=K$, then $\widetilde{S}$ is a sunflower, which is a contradiction to how $\S{[\ell]}{r}$ is defined.  Thus there exists some edge $e'$ with $e'\cap e\ne K$, and since every edge of $\widetilde{S}$ contains the vertex of $K$, we must have that $|e'\cap e|\ge 2$.
\end{proof}

\section{Proof of Theorem~\ref{thm:BGen} and a Lower Bound for $\ex(H,\B{5}{3})$}\label{sec:B}
We prove Theorem~\ref{thm:BGen} by induction on $r$.  Note that $\BB{[\ell]}{2}=\{C_3,C_4,\ldots,C_\ell\}$, and that $\S{[\ell]}{2}=\emptyset$ (this latter case can either be seen directly from the definition or from Lemma~\ref{lem:nonlinSun}).  With this we restate a result of Perarnau and Reed~\cite{PR} in a form that will be convenient to us.
\begin{prop}\label{prop:r2}
	If \eqref{eq:girth} holds for $\ell$ and $r=2$, then for all graphs $G$ with maximum degree at most $\Del$,
	\[\ex(G,\BB{[\ell]}{2}\cup \S{[\ell]}{2})\ge \Del^{-1+\rec{\floor{\ell/2}}-o(1)}\cdot e(G).\]
\end{prop}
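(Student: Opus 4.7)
The plan is to apply Proposition~\ref{prop:homGen} directly in the case $r=2$, using as the ``dense'' target graph an extremal graph of girth $\ell+1$ provided by hypothesis \eqref{eq:girth}.

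First I would identify the relevant families. Since any $2$-uniform Berge-$C_{\ell'}$ is just a cycle $C_{\ell'}$, and since $C_{\ell'}$ is a 2-uniform sunflower only when $\ell'\le 2$, we have $\BB{[\ell]}{2}=\{C_3,C_4,\ldots,C_\ell\}$. Similarly, a direct inspection of the definition (or an appeal to Lemma~\ref{lem:nonlinSun}, noting that graphs are linear) shows that $\S{[\ell]}{2}=\emptyset$. So the target family is simply $\BB{[\ell]}{2}\cup \S{[\ell]}{2}=\{C_3,\ldots,C_\ell\}$, and the hypothesis \eqref{eq:girth} says exactly that there exist $n$-vertex graphs of girth greater than $\ell$ with $n^{1+1/\lfloor \ell/2\rfloor - o(1)}$ edges.

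Next I would set $t=16\Delta$, which satisfies the condition $t\ge r^2 4^r \Delta_1^{1/1}=16\Delta$ of Proposition~\ref{prop:homGen} for $r=2$, and take $J$ to be an extremal $\{C_3,\ldots,C_\ell\}$-free graph on $t$ vertices, so that $e(J)=\ex(t,\B{[\ell]}{2})\ge t^{1+1/\lfloor\ell/2\rfloor - o(1)}$ by the assumed form of \eqref{eq:girth}. By Lemma~\ref{lem:homB}(3), every member of $\HI{\BB{[\ell]}{2}}\subseteq \HI{\B{[\ell]}{2}}$ contains a cycle of length at most $\ell$; since $J$ has girth greater than $\ell$, it is $\HI{\BB{[\ell]}{2}}$-free. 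Proposition~\ref{prop:homGen} therefore gives
\[
\ex(G,\BB{[\ell]}{2})\ge \ex(t,\HI{\BB{[\ell]}{2}})\,t^{-2}\cdot e(G)\ge e(J)\,t^{-2}\cdot e(G)= t^{-1+1/\lfloor\ell/2\rfloor - o(1)}\cdot e(G),
\]
and substituting $t=16\Delta$ absorbs the constant into the $o(1)$ to yield the desired $\Delta^{-1+1/\lfloor\ell/2\rfloor - o(1)}\cdot e(G)$ bound. Because $\S{[\ell]}{2}=\emptyset$, this is equivalent to the stated bound on $\ex(G,\BB{[\ell]}{2}\cup\S{[\ell]}{2})$.

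There is not really a main obstacle here: everything is a bookkeeping exercise once one notices that for $r=2$ the relevant ``locally isomorphic'' graphs are precisely those containing a short cycle, which is exactly what \eqref{eq:girth} lets us avoid at nearly the Moore-bound density. The only mild point to verify is the emptiness of $\S{[\ell]}{2}$, which follows because any 2-uniform sunflower augmented by an edge meeting its kernel remains either a star, a double edge, or a configuration containing two parallel edges, and graphs by convention have no parallel edges.
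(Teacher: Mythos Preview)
Your proof is correct and matches exactly the derivation the paper itself points to: the paper does not give a self-contained argument but cites Perarnau--Reed and remarks that the result ``can also be derived from Proposition~\ref{prop:homGen}'', which is precisely what you carry out. The only slip is arithmetic---$r^2 4^r = 64$ for $r=2$, so one needs $t = 64\Delta$ rather than $16\Delta$---but this constant is absorbed into the $o(1)$ just as you say.
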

This result can also be derived from Proposition~\ref{prop:homGen}, which is essentially a generalization of a theorem of Perarnau and Reed~\cite{PR}.  

It is known that the hypothesis of Proposition~\ref{prop:r2} holds for $\ell=4,5$, see for example \cite{FS}. The $\ell=5$ case allows us to prove the following.

\begin{thm}
	If $H$ is a 3-graph with maximum degree at most $\Del$, then \begin{align*}
		\ex(H,\B{5}{3})&\ge \Del^{-3/4-o(1)}\cdot e(H).\end{align*}
\end{thm}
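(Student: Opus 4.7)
The plan is to follow the same large-/small-codegree dichotomy used in the proof of Theorem~\ref{thm:BGen} above, but to replace the (open) case of $\eqref{eq:girth}$ for $(\ell,r)=(5,3)$ by the $r=2,\ell=5$ case, which is known and drives Proposition~\ref{prop:r2}. First I would reduce to a tripartite $H$ on $V_1 \cup V_2 \cup V_3$ via Erd\H{o}s--Kleitman and set the threshold $D := \Delta^{1/2}$.

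In the large-codegree case, suppose at least $e(H)/2$ edges contain a $2$-set of codegree at least $D$. I would apply Proposition~\ref{prop:coMatch} with $k=2$: by Lemma~\ref{lem:linkB} every element of $\c{P}_2(\B{5}{3})$ contains an element of $\BB{[5]}{2}\cup\S{[5]}{2}=\{C_3,C_4,C_5\}$, and Proposition~\ref{prop:r2} (applied with $\ell=5,r=2$) supplies the required hypothesis with exponent $\beta=1/2$. Thus Proposition~\ref{prop:coMatch} yields $\ex(H,\B{5}{3}) \ge \Omega\l(\f{D^{1/2}}{\Delta(\log \Delta)^7}\r)\cdot e(H) = \Delta^{-3/4-o(1)}\cdot e(H)$.

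In the small-codegree case, at least $e(H)/2$ edges have every $2$-subset of codegree $< \Delta^{1/2}$; pass to the subgraph $H'$ consisting of those edges, so $H'$ has max degree $\le \Delta$ and max $2$-degree $< \Delta^{1/2}$. Choosing $t = \Theta(\Delta^{1/2})$ (which satisfies $t \ge r^2 4^r \Delta_k^{1/(r-k)}$ for both $k = 1$ and $k=2$) and invoking Proposition~\ref{prop:homGen} gives
\[
\ex(H', \B{5}{3}) \ge \ex(t, \HI{\B{5}{3}})\, t^{-3}\, e(H') \ge \ex(t, \B{2}{3}\cup\B{5}{3})\, t^{-3}\, e(H'),
\]
where the last inequality is Lemma~\ref{lem:homB}(2). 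A linear $3$-graph on $t$ vertices with $t^{3/2-o(1)}$ edges and no Berge $5$-cycle would then make the right-hand side $\Delta^{-3/4-o(1)} e(H')$ and finish the proof.

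The hard part will be producing this dense linear Berge-$5$-cycle-free $3$-graph, since the analogous lower bound for $\B{[5]}{3}$-freeness (where Berge $3$- and $4$-cycles would also be killed) is exactly the open case of $\eqref{eq:girth}$ for $(\ell,r)=(5,3)$, and naive random deletion from a partial Steiner triple system leaves far too many Berge $5$-cycles to reach density $t^{3/2}$. I would look for an explicit incidence-geometry construction --- for instance, triples extracted from the point-line incidence of a projective plane, or a generalized-polygon based object --- possibly followed by a targeted deletion pass that kills only Berge $5$-cycles while tolerating Berge $3$- and $4$-cycles. The extra flexibility afforded by not having to rule out Berge $3$- and $4$-cycles is precisely what makes such a construction plausible despite $\eqref{eq:girth}$ for $(\ell,r)=(5,3)$ remaining open, and it is what keeps the overall bound at $\Delta^{-3/4-o(1)}$, matching Theorem~\ref{thm:BGen}'s exponent for $\ell=5$.
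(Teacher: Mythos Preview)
Your approach is exactly the paper's: reduce to tripartite, split on whether many edges have a pair of codegree at least $\Delta^{1/2}$, handle the large-codegree case via Proposition~\ref{prop:coMatch} with $k=2$ and $\beta=1/2+o(1)$ (using Lemma~\ref{lem:linkB} and Proposition~\ref{prop:r2} for $\ell=5$), and handle the small-codegree case via Proposition~\ref{prop:homGen} with $t=\Theta(\Delta^{1/2})$ together with Lemma~\ref{lem:homB}(2).

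The only gap is that you treat the required construction as open. It is not: Ergemlidze, Gy\H{o}ri, and Methuku~\cite{EGM} proved $\ex(t,\B{2}{3}\cup\B{5}{3})=\Theta(t^{3/2})$, which is precisely the linear $3$-graph on $t$ vertices with $\Theta(t^{3/2})$ edges and no Berge $5$-cycle that you need. This is the exact reference the paper invokes at this step. Your instinct that the extra flexibility of allowing Berge $3$- and $4$-cycles (versus the open $\B{[5]}{3}$ case of \eqref{eq:girth}) is what makes the construction tractable is correct, and indeed their construction comes from projective planes, in line with your incidence-geometry suggestion. Once you cite~\cite{EGM}, your proof is complete and matches the paper's.

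One minor technical note: Lemma~\ref{lem:linkB} as stated applies to $\c{P}_2(\BB{[5]}{3}\cup\S{[5]}{3})$, not to $\c{P}_2(\B{5}{3})$ directly; the paper bridges this by observing that no element of $\B{5}{3}$ is a sunflower, so $\B{5}{3}\subseteq\BB{[5]}{3}$ and hence $\ex(H,\B{5}{3})\ge\ex(H,\BB{[5]}{3}\cup\S{[5]}{3})$. You should make that containment explicit.
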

\begin{proof}
	Let $H$ be a $3$-graph with maximum degree at most $\Del$, and let $H_2\sub H$ be a $3$-partite 3-graph with at least $3^{-3}e(H)$ edges.  First assume that at least half the edges of $H_2$ contain a 2-set with 2-degree at least $\Del^{1/2}$.  Using $\B{5}{3}\sub \BB{[5]}{3}$ (i.e. $\B{5}{3}$ contains no sunflowers), Lemma~\ref{lem:linkB}, Proposition~\ref{prop:r2}, and Proposition~\ref{prop:coMatch} applied to $H_2$ with $\be=\half+\ep$ for any $\ep>0$ gives
	\[\ex(H,\B{5}{3})\ge \ex(H_2,\BB{[5]}{3}\cup \S{[5]}{3})\ge (\Del^{1/2})^{1/2-\ep}\cdot \Del^{-1-o(1)}\cdot e(H_2)=\Del^{-3/4-\ep/2-o(1)}\cdot e(H),\]
	and taking $\ep$ arbitrarily small gives the desired bound.
	
	From now on we assume at most half the edges of $H_2$ contain a 2-set with 2-degree larger than $\Del^{1/2}$.  Let $H_3\sub H_2$ be the subgraph consisting of the edges which do not contain a 2-set with 2-degree larger than $\Del^{1/2}$, so \[e(H_3)\ge \half e(H_2)\ge \rec{54}e(H).\]  It was shown by Ergemlidze, Gy\H{o}ri, and Methuku~\cite{EGM} that $\ex(t,\B{2}{3}\cup \B{5}{3})=\Theta(t^{3/2})$.  Using this and Lemma~\ref{lem:homB}(2) gives \[\ex(t,\HI{\B{5}{3}})\ge \ex(t,\B{2}{3}\cup \B{5}{3})=\Om(t^{3/2}).\]  The hypergraph $H_3$ has maximum 2-degree at most $\Del^{1/2}$ by consturction, so we can apply Proposition~\ref{prop:homGen} to $H_3$ with $t=\Theta(\Del^{1/2})$, giving
	\[\ex(H,\B{5}{3})\ge \ex(H_3,\B{5}{3})= \Om(\Del^{-3/4}) \cdot e(H_3)=\Om(\Del^{-3/4}) \cdot e(H),\]
	as desired.
\end{proof}

We now prove Theorem~\ref{thm:BGen} using a similar argument.

\begin{proof}[Proof of Theorem~\ref{thm:BGen}]
Fix $\ell\ge 3$.  We prove by induction on $r$ the stronger fact that if \eqref{eq:girth} holds for $\ell,r$, then for all $r$-graphs $H$ with maximum degree at most $\Del$, we have 
\begin{align}\ex(H,\BB{[\ell]}{r}\cup \S{[\ell]}{r})\ge \Del^{-1+\f{1}{(r-1)\floor{\ell/2}}-o(1)}\cdot e(H).\label{eq:inductB}\end{align}
The case $r=2$ follows from Proposition~\ref{prop:r2}.  Assume \eqref{eq:inductB} holds up to $r$.  

We claim that  \eqref{eq:girth} holding for $\ell,r$ also implies that \eqref{eq:girth} holds for $\ell$ and any $2\le k\le r-1$.  Indeed, given an extremal $\B{[\ell]}{r}$-free $r$-graph $J$, we form a $\B{[\ell]}{k}$-free $k$-graph $J'$ by including some $k$-set from each edge of $J$, and each of these $k$-sets will be distinct because $J$ is linear. With this $e(J')=e(J)$, and it is straightforward to see that $J'$ will be $\B{[\ell]}{k}$-free if $J$ is $\B{[\ell]}{r}$-free, proving the claim. Thus by the inductive hypothesis, if \eqref{eq:girth} holds for $\ell,r$ then for all $2\le k\le r-1$ we have \begin{equation}\ex(H',\BB{[\ell]}{r-k+1}\cup \S{[\ell]}{r-k+1})\ge \Del^{-1+\f{1}{(r-k)\floor{\ell/2}}-o(1)}\cdot e(H'),\label{eq:inductB2}\end{equation} 
whenever $H'$ is an $(r-k+1)$-graph with maximum degree at most $\Del$.

Assume that \eqref{eq:girth} holds for $\ell,r$.  Let $H$ be an $r$-graph with maximum degree at most $\Del$.  We will prove by induction on $2\le k\le r$ that there exists a subgraph $H_k\sub H$ such that $H_k$ is $r$-partite, $e(H_k)\ge r^{-r} 2^{-k}e(H)$, and $H_k$ has maximum $p$-degree at most \[D_p:=\Del^{(r-p)/(r-1)}\] for all $p<k$.  This holds for $k=2$ by taking $H_2\sub H$ to be an $r$-partite subgraph with at least $r^{-r}e(H)$ edges.  Assume the result has been proven through $k$.  If $H_{k}$ contains at least $\half e(H_{k})\ge r^{-r}2^{-k-1}e(H)$ edges which do not contain a $k$-set with $k$-degree larger than $D_k$, then taking $H_{k+1}$ to consist of these edges gives the desired subgraph.  Thus we will assume that at least half the edges of $H_k$ contain a $k$-set with $k$-degree larger than $D_k$.  By \eqref{eq:inductB2} and Lemma~\ref{lem:linkB}, we can apply  Proposition~\ref{prop:coMatch} to $H_k$ with \[\be=1-\f{1}{(r-k)\floor{\ell/2}}+\ep\] for any $\ep>0$ to conclude
\[\ex(H_k,\BB{[\ell]}{r}\cup \S{[\ell]}{r})\ge D_k^{\f{1}{(r-k)\floor{\ell/2}}-\ep}\cdot \Del^{-1-o(1)}\cdot e(H_2)=\Del^{-1+\f{1}{(r-1)\floor{\ell/2}}-\f{r-k}{r-1}\ep-o(1)}\cdot e(H),\]
and taking $\ep$ arbitrarily small gives \eqref{eq:inductB} as desired.  Thus we may assume such a subgraph $H_{k+1}$ exists.

In total we have found a subgraph $H_r\sub H$ with $e(H_r)=\Om(e(H))$ which has maximum $k$-degree at most $D_k$ for all $k$.  By Lemmas~\ref{lem:nonlinSun} and \ref{lem:homB}(1), every element of $\HI{\S{[\ell]}{r}}$ contains an element of $\B{2}{r}$ as a subgraph.  Thus by Lemma~\ref{lem:homB}(3), every element of $\HI{\BB{[\ell]}{r}\cup \S{[\ell]}{r}}$ contains an element of $\B{[\ell]}{r}$ a subgraph, so \[\ex(t,\HI{\BB{[\ell]}{r}\cup \S{[\ell]}{r}})\ge\ex(t,\B{[\ell]}{r})\ge t^{1+\rec{\floor{\ell/2}}-o(1)},\] 
with this last step using the assumption that \eqref{eq:girth} holds for $\ell,r$.  We then apply Proposition~\ref{prop:homGen} with $t=\Theta(\Del^{1/(r-1)})$ and conclude \[\ex(H_r,\BB{[\ell]}{r}\cup \S{[\ell]}{r})\ge \Del^{-1+\f{1}{(r-1)\floor{\ell/2}}-o(1)}\cdot e(H)\] as desired.
\end{proof}

\section{Proof of Theorem~\ref{thm:disjoint}}\label{sec:disjoint}
Recall that $\F\in \B{4}{3}$ is the hypergraph from Figure~\ref{fig}. To improve upon the trivial bound $\ex(H,\F)\ge \ex(H,\B{4}{3})\ge \Del^{-3/4-o(1)}e(H)$ from Corollary~\ref{cor:B}, we use the following counting lemma.
\begin{lem}\label{lem:countF}
	Let $\c{N}_{\F}(H)$ denote the number of copies of $\F$ in the 3-graph $H$.  If $H$ has maximum degree at most $\Del$ and maximum 2-degree at most $D$, then
	\[\c{N}_{\F}(H)\le 9D\Del\cdot e(H).\]
\end{lem}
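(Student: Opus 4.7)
The plan is to bound the number of labeled copies of $\F$ (injections $V(\F)\to V(H)$ extending edges to edges) in $H$ by enumerating the four edges in a carefully chosen order exploiting the two degree bounds, and then divide by $|\mathrm{Aut}(\F)|$ to get the subgraph count. First I would record the relevant structural features of $\F$: the vertex $u_1$ has degree $3$, the vertex $v_3$ has degree $1$, and the remaining four vertices all have degree $2$; together with the edge adjacencies, this forces every automorphism to fix $u_1$ and $v_3$, and a quick check shows the only nontrivial automorphism is the swap $(u_2\leftrightarrow u_3,\ v_1\leftrightarrow v_2)$, so $|\mathrm{Aut}(\F)|=2$.

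To count labeled copies, I would enumerate the four edges in the order $e_4,e_2,e_3,e_1$, where $e_i$ refers to the edges listed in the caption of Figure~\ref{fig}. Namely: (i) pick $e_4=\{v_1,v_2,v_3\}$ as an ordered triple, giving at most $6\,e(H)$ choices; (ii) pick an edge $e_2$ of $H$ through $v_1$ and designate which of its two non-$v_1$ vertices is $u_1$ (and hence which is $u_2$), giving at most $2\Del$ choices; (iii) pick an edge $e_3$ of $H$ through the pair $\{u_1,v_2\}$, which determines $u_3$, giving at most $D$ choices; (iv) check whether $\{u_1,u_2,u_3\}$ is an edge of $H$, contributing at most a factor of $1$. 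Multiplying yields at most $12\,D\Del\cdot e(H)$ labeled copies, so at most $6\,D\Del\cdot e(H)\le 9\,D\Del\cdot e(H)$ copies of $\F$ after dividing by $|\mathrm{Aut}(\F)|=2$.

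The only real choice in the argument is the enumeration order; this is also where one might go wrong. The naive order starting with the central edge $e_1$ and extending via shared pairs $\{u_1,u_2\}$, $\{u_1,u_3\}$, $\{v_1,v_2\}$ pays the 2-degree bound three times and gives only $O(D^3)\cdot e(H)$, which is weaker whenever $D\ll\Del$. By selecting $e_4$ first we may instead use the full max-degree bound $\Del$ when extending from $v_1$ to $e_2$ (since nothing else constrains $e_2$ at that stage), while the final edge $e_1$ costs nothing because all three of its vertices $u_1,u_2,u_3$ are already pinned down; the remaining extension to $e_3$ is the only place where we still need to pay the tighter 2-degree bound $D$. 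Minor bookkeeping on distinctness of the six vertices can be ignored since we are only after an upper bound.
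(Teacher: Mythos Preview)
Your proof is correct and follows essentially the same approach as the paper: both bound $\c{N}_{\F}(H)$ by greedily enumerating the four edges of a copy of $\F$ so that one step costs $e(H)$, one costs the $2$-degree bound $D$, one costs the vertex-degree bound $\Del$, and the last edge is forced. The paper starts from the central edge $\{u_1,u_2,u_3\}$, counts edge-ordered $4$-tuples, and obtains $e(H)\cdot 3D\cdot 3\Del\cdot 1=9D\Del\cdot e(H)$ directly without computing $\mathrm{Aut}(\F)$; your variant starts from the outer edge $\{v_1,v_2,v_3\}$, counts vertex-labeled copies, and after dividing by $|\mathrm{Aut}(\F)|=2$ gives the slightly sharper constant $6$.
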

\begin{proof}
	Consider all of the ways of choosing a 4-tuple $(e_1,e_2,e_3,e_4)$ of edges of $H$ such that there exists an isomorphism $\phi$ from these edges to $\F$ such that $\phi(e_1)=\{u_1,u_2,u_3\}, \phi(e_2)=\{u_1,u_2,v_1\},\ \phi(e_3)=\{v_1,v_2,v_3\},\ \phi(e_4)=\{u_3,v_2,v_3\}$.  Observe that $\c{N}_{\F}(H)$ is at most the number of such 4-tuples, so it suffices to bound this quantity.
	
	Any of the $e(H)$ edges of $H$ can be $e_1$ in such a 4-tuple.  Given $e_1$, $e_2$ must be one of the at most $3D$ edges intersecting $e_1$ in a pair, and given this $e_3$ must be one of the at most $3\Del$ edges intersecting $e_2$ in a vertex.  Once these three edges have been chosen, $e_4$ is uniquely determined, so we conclude the desired bound.
\end{proof}

\begin{proof}[Proof of Theorem~\ref{thm:disjoint}]	
	Let $H'\sub H$ be a 3-partite 3-graph with at least $3^{-3} e(H)$ edges.  If more than half of the edges of $H'$ contain a pair of vertices with 2-degree at least $\Del^{4/5}$, then by using Lemmas~\ref{lem:linkB} and Proposition~\ref{prop:r2}, we apply Proposition~\ref{prop:coMatch} with $k=2$ and $\be=1/2+o(1)$ to find \[\ex(H,\F)\ge \ex(H',\F)\ge \Om\l(\f{(\Del^{4/5})^{1-\be}}{\Del (\log \Del)^7}\r) \cdot e(H)= \Del^{-3/5-o(1)}e(H).\] 
	
	If instead at most half of the edges of $H'$ contain a pair with 2-degree at least $\Del^{4/5}$, then we can let $H''\sub  H'$ consist of all of these edges.  Let $H_p\sub H''$ be the random 3-graph obtained by keeping each edge of $H''$ independently and with probability $p=\rec{9}\Del^{-3/5}$.  Observe that
	\begin{align*}
		\E[e(H_p)]=&p\cdot e(H'')\ge \rec{486}\Del^{-3/5} \cdot e(H),\tr{ and}\\ 
		\E[\c{N}_{\F}(H_p)]=&p^4 \cdot \c{N}_{\F}(H'')\le 9^{-4} \Del^{-12/5}\cdot 9 \Del^{9/5} e(H)=\rec{729}\Del^{-3/5} \cdot e(H),
	\end{align*}
	where this last inequality used Lemma~\ref{lem:countF} and that $H''$ has maximum 2-degree at most $D=\Del^{4/5}$ and that it has at most $e(H)$ edges.  In particular, there exists some specific choice of $H'''\sub H''$ with $e(H''')-\c{N}_{\F}(H''')=\Om(\Del^{-3/5}e(H))$.  Deleting an edge from each copy of $\F$ in $H'''$ gives an $\F$-free hypergraph with the desired number of edges.
\end{proof}

\section{Proofs of Theorem~\ref{thm:loose} and Proposition~\ref{prop:linear}}\label{sec:L}

To prove Theorem~\ref{thm:loose} and Proposition~\ref{prop:linear} we require the following lemmas.

\begin{lem}\label{lem:looseCount}
	Let $H$ be an $r$-graph with maximum degree $\Del$ and maximum 2-degree $D$.  If $\c{N}_\ell(H)$ denotes the number of copies of $C_\ell^r$ in $H$, then
	\[\c{N}_{\ell}(H)\le r^{\ell} D\Del^{\ell-2}\cdot e(H).\]
\end{lem}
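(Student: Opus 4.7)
The plan is to bound $\c{N}_\ell(H)$ by counting ordered edge-tuples that realize a loose cycle and then noting that each unordered copy of $C_\ell^r$ gives rise to at least one such tuple. Concretely, I would count tuples $(e_1,e_2,\ldots,e_\ell)$ of distinct edges of $H$ for which there exist distinct vertices $v_1,\ldots,v_\ell$ with $v_i \in e_i \cap e_{i+1}$ (indices cyclic) witnessing the loose-cycle structure; the number of such ordered tuples is an upper bound on $\c{N}_\ell(H)$ (in fact it overcounts by a factor of $2\ell$, but we do not need this).

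I would build the tuple one edge at a time. First pick $e_1$: at most $e(H)$ choices. For each $2 \le i \le \ell-1$, the edge $e_i$ meets $e_{i-1}$ at some vertex $v_{i-1}$, so I choose $v_{i-1} \in e_{i-1}$ (at most $r$ options) and then an edge $e_i \ne e_{i-1}$ through $v_{i-1}$ (at most $\Delta$ options by the maximum-degree hypothesis), giving at most $r\Delta$ choices for each such $e_i$. This handles $e_2,\ldots,e_{\ell-1}$ and contributes a factor of $(r\Delta)^{\ell-2}$.

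The closing edge $e_\ell$ is the step where the $2$-degree enters. This edge must contain both $v_{\ell-1} \in e_{\ell-1}$ and $v_\ell \in e_1$. I pick $v_{\ell-1}$ in $e_{\ell-1}$ ($r$ choices) and $v_\ell$ in $e_1$ ($r$ choices); once the pair $\{v_{\ell-1},v_\ell\}$ is fixed, the hypothesis on the maximum $2$-degree gives at most $D$ edges $e_\ell$ containing it. Thus the contribution from $e_\ell$ is at most $r^2 D$.

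Multiplying these bounds yields a total count of at most
\[
e(H)\cdot (r\Delta)^{\ell-2}\cdot r^{2}\, D = r^{\ell}\, D\, \Delta^{\ell-2}\cdot e(H),
\]
which gives the desired inequality. There is no real obstacle here: the only subtlety is that we are dropping the requirements that the $v_i$ be distinct and that no non-consecutive edges meet, but since these constraints only reduce the count, ignoring them still yields a valid upper bound.
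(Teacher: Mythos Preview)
Your proof is correct and follows essentially the same approach the paper indicates: it says the argument is a straightforward counting analogous to the proof of Lemma~\ref{lem:countF}, namely choosing the edges of the cycle one at a time and bounding the number of options using the degree and $2$-degree hypotheses. Your enumeration $e(H)\cdot (r\Delta)^{\ell-2}\cdot r^{2}D$ is exactly this, with the $2$-degree bound $D$ entering at the closing edge as it should.
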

\begin{lem}\label{lem:linear}
	If $H$ is an $r$-graph with maximum 2-degree at most $D$, then there exists a linear subgraph $H'\sub H$ with $e(H')\ge e(H)/r^2D$.
\end{lem}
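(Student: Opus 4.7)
The plan is to use a standard conflict-graph argument. Define an auxiliary ordinary graph $G$ on vertex set $E(H)$ by joining two hyperedges of $H$ whenever they share at least two common vertices. The edge sets of linear subgraphs of $H$ are exactly the independent sets of $G$, so it suffices to find a large independent set in $G$.

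First I would bound the maximum degree of $G$. Fix a hyperedge $e\in E(H)$. Then $e$ contains $\binom{r}{2}$ pairs of vertices, and by hypothesis each such pair is contained in at most $D$ hyperedges of $H$, hence in at most $D-1$ hyperedges other than $e$ itself. Summing over these pairs, $e$ is adjacent in $G$ to at most $\binom{r}{2}(D-1)$ other hyperedges.

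Next I would invoke the standard greedy bound, which says that any graph on $N$ vertices with maximum degree at most $d$ contains an independent set of size at least $N/(d+1)$. Applied to $G$, this produces an independent set of size at least
\[\f{e(H)}{\binom{r}{2}(D-1)+1}\ge \f{e(H)}{r^2 D},\]
where the last inequality is a direct check valid for all $r\ge 2$ and $D\ge 1$ (and the statement is trivial for $D=0$, since then $H$ has no edges). Taking $H'\sub H$ to be the subgraph whose edge set is this independent set yields the desired linear subgraph.

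No real obstacle arises: the entire argument is a couple of lines of counting plus the greedy lower bound for independent sets, and there is some slack in the numerical bound $\binom{r}{2}(D-1)+1 \le r^2 D$ in case one wants to strengthen the denominator slightly.
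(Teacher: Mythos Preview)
Your proposal is correct and is precisely the kind of argument the paper has in mind: the paper merely states that the proof ``uses a greedy construction'' and omits the details, and your conflict-graph formulation together with the greedy independent-set bound is the standard way to carry this out. The numerical check $\binom{r}{2}(D-1)+1\le \binom{r}{2}D\le r^2D$ for $r\ge 2$, $D\ge 1$ is fine.
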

The proof of Lemma~\ref{lem:looseCount} is a straightforward counting argument as in the proof of Lemma~\ref{lem:countF}, and the proof of Lemma~\ref{lem:linear} uses a greedy construction.  We omit the details.

Lastly, we require a special case of a theorem of Mubayi and Yepremyan~\cite{MY} for random hypergraphs. We recall that $H_{n,p}^r$ is the random $r$-graph on $n$ vertices obtained by including each edge of $K_n^r$ independently and with probability $p$, and that a statement depending on $n$ holds \textit{asymptotically almost surely}, or simply a.a.s., if it holds with probability tending to 1 as $n$ tends to infinity.
\begin{prop}[\cite{MY}]\label{prop:randLoose}
	For all even $\ell\ge 4$ and $r\ge 3$, at $p=n^{-r+2}$ we have a.a.s.
	\[\ex(H_{n,p}^{r},C_{\ell}^{r})\le n^{1+\rec{\ell-1}+o(1)}.\]
\end{prop}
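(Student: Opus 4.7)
The plan is to deduce this from the hypergraph container method of Balogh-Morris-Samotij and Saxton-Thomason, in the spirit of the Morris-Saxton treatment of random Tur\'an numbers for even graph cycles, adapted to the $r$-uniform loose-cycle setting. The argument will have three ingredients: a balanced supersaturation lemma for $C_\ell^r$ in dense $r$-graphs, the hypergraph container theorem applied to the auxiliary hypergraph whose edges are copies of $C_\ell^r$, and a Chernoff-plus-union-bound step in $H_{n,p}^r$.

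The central technical step will be a balanced supersaturation lemma: there exist constants $c, C > 0$ such that every $r$-graph $G$ on $n$ vertices with $m \ge C n^{1 + \frac{1}{\ell-1}}$ edges admits a family $\mathcal{F}$ of copies of $C_\ell^r$ in $G$ of size $|\mathcal{F}| \ge c \cdot m \cdot (m/n^{1+1/(\ell-1)})^{\ell - 1}$, such that for every $1 \le j \le \ell$ and every $j$-subset $T \subseteq E(G)$, the number of members of $\mathcal{F}$ containing $T$ is at most $\mathrm{polylog}(n) \cdot |\mathcal{F}| \cdot (n^{1+1/(\ell-1)}/m)^{j}$. One would construct $\mathcal{F}$ by building loose paths edge-by-edge, repeatedly extending a partial path through a vertex of above-average degree in the current link and pruning bad extensions so that every partial path admits many completions; the even-$\ell$ hypothesis will enter when closing the loose path into a loose cycle at the final step.

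Given this, apply the hypergraph container theorem to the $\ell$-uniform hypergraph $\mathcal{H}$ on vertex set $E(K_n^r)$ whose edges are the copies of $C_\ell^r$ in $K_n^r$. The codegree bounds on $\mathcal{F}$ translate into the required bounds on $\Delta_j(\mathcal{H})$ with parameter $\tau = n^{1 + \frac{1}{\ell-1} - r + o(1)}$, yielding a family $\mathcal{C}$ of containers for $C_\ell^r$-free subgraphs of $K_n^r$ with $\log |\mathcal{C}| \le n^{1 + \frac{1}{\ell - 1} + o(1)}$. Finally, setting $p = n^{-r+2}$, for each fixed container $C$ a Chernoff bound gives that $\Pr[|C \cap E(H_{n,p}^r)| > n^{1 + \frac{1}{\ell-1} + o(1)}]$ is exponentially smaller than $|\mathcal{C}|^{-1}$; a union bound over $\mathcal{C}$ then finishes the argument, since every $C_\ell^r$-free subgraph of $H_{n,p}^r$ lies in some container.

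The main obstacle is the balanced supersaturation lemma, specifically achieving the sharp exponent $\frac{1}{\ell - 1}$ simultaneously for all codegree levels in the auxiliary hypergraph. A naive reduction to the 2-shadow graph followed by Bondy-Simonovits would produce only the weaker threshold $n^{1 + \frac{1}{\lfloor \ell/2 \rfloor}}$, so the construction must exploit the full hypergraph structure, building loose paths in a way that leaves many choices for each successive edge while avoiding the pitfall that different partial extensions collide on a common edge of $G$; this is the place where the parity of $\ell$ is essential, as it governs how the two halves of the growing path can be made to meet.
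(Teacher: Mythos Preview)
The paper does not prove this proposition at all: it is quoted verbatim as a special case of a theorem of Mubayi and Yepremyan~\cite{MY} and used as a black box in the proofs of Theorem~\ref{thm:loose} and Proposition~\ref{prop:linear}. So there is no ``paper's own proof'' to compare your attempt against.

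That said, your outline is broadly the method Mubayi and Yepremyan actually use: balanced supersaturation for $C_\ell^r$, the hypergraph container theorem, and a Chernoff-plus-union-bound over containers. Two remarks on your sketch. First, what you have written is a plan, not a proof: you state the balanced supersaturation lemma with the correct exponents but do not prove it, and you yourself flag it as the main obstacle. This is precisely the technical heart of~\cite{MY}, and carrying it out requires real work beyond ``build loose paths edge by edge and prune.'' Second, your final step is slightly underspecified: you bound $\log|\mathcal{C}|$ but never say that each individual container $C$ has at most $n^{r-1+\frac{1}{\ell-1}+o(1)}$ edges. Without that, the Chernoff bound on $|C\cap E(H_{n,p}^r)|$ does not give the claimed threshold; the size bound on each container comes from iterating the container theorem (using supersaturation at each step) until every container has few copies of $C_\ell^r$, hence few edges. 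Both points are standard in the container literature, but as written your proposal elides them.
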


\begin{proof}[Proof of Theorem~\ref{thm:loose}]
	The proof is very similar to that of Theorem~\ref{thm:disjoint}, so we will omit some of the redundant details.  By losing at most a constant fraction of the edges, we can assume $H$ is $r$-partite.  If more than half of its edges contain a pair with 2-degree at least $D=\Del^{1/\ell}$, then using Lemma~\ref{lem:linkL} we apply Proposition~\ref{prop:coMatch} with $\be=0$ to find $\ex(H,C_\ell^{3})\ge  \Del^{-1+1/\ell-o(1)}e(H)$. 
	
	Thus by losing at most half of its edges we will assume $H$ has maximum 2-degree at most $D$.  Let $H_p$ be $H$ after keeping each edge with probability \[p=3^{-1-2/(\ell-1)} \Del^{-1+1/(\ell-1)} D^{-1/(\ell-1)}.\]  By Lemma~\ref{lem:looseCount}, we find
	\begin{equation}\E[\c{N}_{\ell}(H_p)]\le 3^\ell \Del^{\ell-2}Dp^\ell \cdot e(H)= 3^{\f{-2\ell }{\ell-1}} \Del^{-1+\rec{\ell}}\cdot e(H)  =3^{-1}\cdot \E[e(H_p)].\label{eq:loose}\end{equation}
	Thus if we define $H'_p\sub H_p$ by deleting an edge from  each copy of $C_\ell^{r}$, then we get a subgraph which is $C_\ell^r$-free and which has $\Om(\Del^{-1+1/(\ell-1)})e(H)$ edges in expectation, proving the lower bound.
	
	For the construction, take $p=n^{2-r}$ and $H= H_{n,p}^{r}$.  A straightforward argument using the Chernoff bound \eqref{eq:Chern} gives, for this choice of $p$, that a.a.s.\  $e(H_{n,p}^{r})=\Theta(n^2)$ and that $H_{n,p}^{r}$ has maximum degree $\Del=\Theta(n)$.  By Proposition~\ref{prop:randLoose} we have a.a.s.\[\ex(H_{n,p}^r,C_{\ell}^{r})\le n^{1+\rec{\ell-1}+o(1)}=\Del^{-1+\rec{\ell-1}+o(1)}\cdot e(H_{n,p}^r).\]
	In particular, such an $H$ exists for $n$ sufficiently large, giving the desired result.
\end{proof}

\begin{proof}[Proof of Proposition~\ref{prop:linear}]
	The proof is nearly identical to that of Theorem~\ref{thm:loose}.  For the lower bound, we use that $H$ has maximum 2-degree at most 1, so the exact same computation for the proof of \eqref{eq:loose} works by taking $p=\Theta(\Del^{-1+1/(\ell-1)})$.  For the construction, it is straightforward to use the Chernoff bound \eqref{eq:Chern} to show that a.a.s. $H_{n,p}^r$ at $p=n^{2-r}$ has maximum 2-degree at most $O(\log n)$, so we can apply Lemma~\ref{lem:linear} to find a large linear subgraph $H\sub H_{n,p}^r$ and the rest of the proof works out as before.
\end{proof}

\section{Concluding Remarks}\label{sec:concluding}

$\bullet$ Using arguments analogous to the ones used throughout the paper, it is not too difficult to show for any $F\in \B{4}{3}\sm \{C_4^3,\F\}$ that $\ex(H,F)\ge \Del^{-1/2-o(1)}e(H)$ whenever $H$ has maximum degree at most $\Del$, and this is often tight by considering $H$ to be the clique.  However, our best bounds for $C_4^3$ and $\F$ given by Theorems~\ref{thm:loose} and \ref{thm:disjoint} still have significant gaps.

$\bullet$ Theorem~\ref{thm:disjoint} shows that $\ex(H,\F)\ge \Del^{-3/5-o(1)}e(H)$ for any host $H$ with maximum degree at most $\Del$.  It is known that $\ex(n,\F)=\Theta(n^2)$, so taking $H=K_{\sqrt{\Del}}^3$ gives $\ex(H,\F)=O(\Del^{-1/2}) e(H)$.  It is unclear which of these bounds is closer to the truth, and we leave the following as an open problem.

\begin{prob}
	Determine whether \[\ex(H,\F)\ge \Del^{-1/2-o(1)}\cdot e(H)\] whenever $H$ is a 3-graph with maximum degree $\Del$.
\end{prob}

$\bullet$ While Proposition~\ref{prop:linear} gives essentially tight bounds for loose even cycles in linear hosts, our bounds are far from tight for general hosts.  In particular, we do not know tight bounds for $C_4^3$.
\begin{prob}
	Determine whether there exists a 3-graph $H$ with maximum degree $\Del$ and \[\ex(H,C_4^3)\le \Del^{-3/4+o(1)}\cdot e(H).\]
\end{prob}
A reasonable candidate for such an $H$ is $H_{n,p}$ with $p=n^{-2/3}$.  It is conjectured by Mubayi and Yepremyan~\cite{MY} that $\ex(H_{n,p}^3,C_4^3)\le n^{4/3+o(1)}$ a.a.s.\ at $p=n^{-2/3}$, and this can be rephrased as saying $\ex(H_{n,p}^3,C_4^3)\le \Del^{-3/4+o(1)}e(H)$ a.a.s.  Conversely, if there existed a method which improved upon the known lower bounds for $\ex(H_{n,p}^3,C_4^3)$, then it is possible that this method could also be used to improve lower bounds for general hosts $H$.

$\bullet$ Variants of Propositions~\ref{prop:homGen} and \ref{prop:coMatch} were used to prove essentially tight bounds on the relative Tur\'an numbers for certain kinds of complete $r$-partite $r$-graphs~\cite{SV-K22}, and we suspect that these propositions will continue to be of use for future investigations into relative Tur\'an numbers of hypergraphs.

\textbf{Acknowledgments.}  The authors thank the anonymous referees for their careful reading of this paper and their many helpful comments.

\bibliographystyle{abbrv}

\end{document}